\documentclass[]{article}

\usepackage[left=3cm, right=3.5cm, top=3cm]{geometry}
\usepackage{amssymb}
\usepackage{amsmath}
\usepackage{amsfonts}
\usepackage{amsthm}
\usepackage{mathtools}
\usepackage{listings}
\usepackage{latexsym}
\usepackage{booktabs}
\usepackage{multirow}
\usepackage{graphicx}
\usepackage{xcolor}
\usepackage{epsfig}
\usepackage{epstopdf}
\usepackage{url}
\usepackage{relsize}
\usepackage{hyperref}
\usepackage{textcomp}
\usepackage{indentfirst}
\usepackage{tipa}
\usepackage{algorithm}
\usepackage{algpseudocode}
\usepackage{subcaption}
\usepackage{dsfont}
\usepackage{float}
\usepackage[mathscr]{euscript}
\usepackage{enumitem}
\usepackage[title]{appendix}
\usepackage{bm}
\usepackage[backend=bibtex,style=numeric,citestyle=numeric,giveninits=true,sortcites=true,maxbibnames=4]{biblatex}

\renewbibmacro{in:}{}
\addbibresource{library.bib}

\usepackage{color}
\definecolor{deepblue}{rgb}{0,0,0.5}
\definecolor{deepred}{rgb}{0.6,0,0}
\definecolor{deepgreen}{rgb}{0,0.5,0}

	\definecolor{DarkBlue}{rgb}{0.00,0.00,0.55}
	\definecolor{Black}{rgb}{0.00,0.00,0.00}
	\hypersetup{
		linkcolor = DarkBlue,
		anchorcolor = DarkBlue,
		citecolor = DarkBlue,
		filecolor = DarkBlue,
		colorlinks  = true,
		urlcolor = Black
	}
	
\graphicspath{ {./Figures/},{./} }
\DeclareGraphicsExtensions{.eps,.pdf,.png,.jpg}
	
\newtheorem{theorem}{Theorem}[section]

\newtheorem{corollary}[theorem]{Corollary}
\theoremstyle{definition}

\newtheorem{example}{Example}[section]
\theoremstyle{remark}
\newtheorem{remark}{Remark}[section]

\newtheorem{assumption}{Assumption}[section]
		
\newcommand{\TheTitle}{Multi-output multilevel best linear unbiased estimators via
	semidefinite programming}

\newcommand{\TheAuthors}{M.~Croci, K.~E.~Willcox, S.~J.~Wright}

\title{{\TheTitle}\thanks{\textbf{Funding:} 
    MC thanks Nicole Aretz for her helpful feedback on Section \ref{sec:numerical_results}.
    MC and KW's research is based upon work
    supported by the Department of Energy NNSA under Award Number
    DE-NA0003969. SW acknowledges support from a J.~T.~Oden Faculty
    Fellowship from the Oden Institute at the University of Texas at
    Austin as well as NSF Awards DMS 2023239 and CCF 2224213, DOE via
    subcontract 8F-30039 from Argonne National Laboratory, and AFOSR
    via subcontract UTA20-001224 from the University of Texas at
    Austin. KW and SW additionally acknowledge support from the
    AFOSR MURI on Machine Learning of Physics-based Systems, AFOSR Grant FA9550-21-1-0084.}}

\author{
  M. Croci\thanks{Oden Institute for Computational Engineering and Sciences, University of Texas at Austin, Austin, TX, USA. (\textbf{\url{matteo.croci@austin.utexas.edu}}), (\textbf{\url{kwillcox@oden.utexas.edu}}).}
  \and
  K.~E.~Willcox\footnotemark[2]
  \and
  S.~J.~Wright\thanks{Computer Science Department, University of Wisconsin-Madison, Madison, WI, USA. (\textbf{\url{swright@cs.wisc.edu}}).}
}

\usepackage{amsopn}
\DeclareMathOperator{\diag}{diag}
\DeclareMathOperator{\E}{\mathbb{E}}
\DeclareMathOperator{\V}{\mathbb{V}}

\DeclareMathOperator{\eps}{{\varepsilon}}
\DeclareMathOperator{\beps}{{\bm{\varepsilon}}}

\renewcommand{\kappa}{\chi}

\usepackage{todonotes}
\definecolor{myblue}{RGB}{135, 206, 250}
\definecolor{mygreen}{RGB}{144,238,144}
\definecolor{myyellow}{RGB}{255, 219, 88}

\newcommand{\R}{\mathbb{R}}
\newcommand{\N}{\mathbb{N}}

\ifpdf
\hypersetup{
  pdftitle={\TheTitle},
  pdfauthor={\TheAuthors}
}
\fi

\begin{document}

\maketitle

\begin{abstract}
    Multifidelity forward uncertainty quantification (UQ) problems
    often involve multiple quantities of interest and heterogeneous
    models (e.g., different grids, equations, dimensions, physics, surrogate
    and reduced-order models). While computational efficiency is
    key in this context, multi-output strategies in
    multilevel/multifidelity methods are either sub-optimal or
    non-existent.
    In this paper we extend multilevel best linear unbiased estimators (MLBLUE)
    to multi-output forward UQ problems and we
    present new semidefinite programming formulations for their optimal setup. Not only do these formulations yield the optimal number of samples required, but also the optimal selection of low-fidelity models to use.
    While existing MLBLUE approaches are single-output only and require a non-trivial nonlinear
    optimization procedure, the new multi-output formulations can be solved reliably and efficiently.
    We demonstrate the efficacy of
    the new methods and formulations in practical UQ problems with
    model heterogeneity.
\end{abstract}

\begin{keywords}
  Multilevel Monte Carlo, multifidelity Monte Carlo, semidefinite programming, uncertainty quantification, sample allocation, model selection.
\end{keywords}


\section{Introduction}
\label{sec:intro}

A central task in forward uncertainty quantification (UQ) is estimating the expectation of output quantities of interest (QoI). When the same QoI is the output of different models of varying fidelity, multilevel and multifidelity Monte Carlo algorithms \cite{heinrich2001multilevel,giles2008multilevel,NgWillcox2014multifidelity,giles2015multilevel,haji2016multi,peherstorfer2016optimal,gorodetsky2020generalized,schaden2020multilevel,schaden2021asymptotic} efficiently estimate its expectation via sampling: by exploiting the correlations between models, these algorithms reduce the estimator variance for a fixed cost (or, equivalently, reduce the total cost to obtain an estimator with a given level of variance). 
Multilevel and multifidelity methods require selecting which models to sample and the number of samples to be drawn, a choice that can impact efficiency if sub-optimal. But setting up these methods optimally is a non-trivial operation, especially when UQ problems involve multiple QoIs and heterogeneous models, as is often the case in many engineering and scientific applications. We say models are ``heterogeneous'' if they are constructed through different approaches (e.g., varying grids, equations, dimensions, or physics, or using surrogate and reduced-order models) or if their output structure varies (e.g., if models have different QoIs). Since it is crucial to design estimators that are as efficient as possible, in this paper we extend multilevel best linear unbiased estimators (MLBLUEs) \cite{schaden2020multilevel,schaden2021asymptotic} to the multi-output case and propose a fast, reliable strategy for their optimal model selection and sample allocation.

There are many multilevel and multifidelity estimators available in the literature \cite{heinrich2001multilevel,giles2008multilevel,NgWillcox2014multifidelity,giles2015multilevel,haji2016multi,peherstorfer2016optimal,gorodetsky2020generalized,schaden2020multilevel,schaden2021asymptotic}, each differing in the way they exploit the statistical relations between models.  A key ingredient of any multilevel/multifidelity algorithm is a strategy for optimal model
selection and sample allocation: given a set of available models with
different characteristics, it is important to determine which models
actually lead to a performance benefit, and how much these
models should be sampled. The optimal \emph{model selection and sample
	allocation problem} (MOSAP) depends on both the method and on the
set of available models. In some specific cases the MOSAP is trivial
to solve; the model selection largely simplifies when all models
belong to the same discretization hierarchy
\cite{giles2008multilevel,giles2015multilevel}.
However, in the
general case in which models are highly heterogeneous, the MOSAP is an
NP-hard nonlinear mixed-integer programming problem, and its
complexity increases exponentially with the number of available models.

In this paper we focus on MLBLUEs, and specifically on techniques for
solving their MOSAP.
MLBLUEs were presented by Schaden and Ullmann in
\cite{schaden2020multilevel,schaden2021asymptotic}, and have the
appealing property of being provably optimal among all multilevel
linear unbiased estimators\footnote{This includes most multilevel and multifidelity methods and all previously mentioned methods.}
\cite{schaden2020multilevel,schaden2021asymptotic}. Furthermore, while multilevel Monte Carlo (MLMC) \cite{heinrich2001multilevel,giles2008multilevel,giles2015multilevel} can
only couple\footnote{We denote a group of models to be ``coupled'' if
  the algorithm samples all these models with the same input.} models in
pairs, and multifidelity Monte Carlo (MFMC) \cite{NgWillcox2014multifidelity,peherstorfer2016optimal} requires all models to be
coupled together, MLBLUEs have no coupling restrictions and can work
with any combination of models \cite{schaden2020multilevel}.

Nevertheless, setting up a MLBLUE presents challenges.
The MOSAP formulation in \cite{schaden2020multilevel} is a (weakly) convex nonlinear
problem with linear constraints, but its objective function is defined
via the inverse of a matrix that may be singular. This is handled by
adding a small positive term to the diagonal of the matrix, but the
resulting problem is still not strongly convex in general and may be
ill-conditioned, making the MLBLUE MOSAP a hard problem to solve.
For instance, the generic optimization solver \texttt{fmincon} is
used in the numerical tests of \cite[Section~6]{schaden2020multilevel},
but it fails in several cases. Another limitation of the existing MLBLUEs
is that they have been
designed only for a single output. Extending these estimators to multiple
QoIs, while trivial in theory, can be very challenging in practice
since any difficulties in the MLBLUE MOSAP optimization are likely to
be exacerbated in the multi-output case.

In this paper we make the following contributions.
\begin{itemize}
\item We reformulate the MLBLUE MOSAP as a semidefinite programming
  (SDP) problem. SDPs and linear programming are both conic
  optimization problems, a special class of convex optimization
  \cite{vandenberghe1996semidefinite}. SDPs have been an area of
  intense study in the optimization community since the mid-1990s, and
  powerful algorithms and software have been developed
  \cite{vandenberghe1996semidefinite,dsdp5,tutuncu2003solving,andersen2000mosek,diamond2016cvxpy}.
  Our SDP reformulation does not require
  taking the inverse of a possibly ill-conditioned matrix,
   nor does it require a positive shift to
  ensure positive definiteness of this matrix. We prove the
  equivalence between the SDP reformulation and the original MOSAP
  problem with zero shift from \cite{schaden2020multilevel}.
	
\item The original MLBLUE MOSAP minimizes the estimator variance for
  fixed cost \cite{schaden2020multilevel}. We present a SDP
  reformulation minimizing the MLBLUE cost for a given statistical
  error tolerance. Depending on the UQ problem, either formulation may
  be useful, and they can both be solved efficiently.

\item We introduce a multi-objective MOSAP SDP formulation that permits computation of 
  Pareto points defined by the two objectives of statistical error and
  cost. This is the first such formulation in the literature and is of practical relevance in cases where an appropriate limit on the computational budget is not known \emph{a priori}.
	
\item We extend MLBLUEs to the multi-output case and present an SDP
  reformulation for the multi-output MOSAP that can be solved
  efficiently. We show that this multi-output estimator outperforms
  MLMC and MFMC in numerical experiments.
  
\item We demonstrate numerically that MLBLUEs maintain their efficiency when the prohibitive cost of high-fidelity models imposes sample restrictions and/or prevents the estimation of all model correlations.
\end{itemize}

This paper is structured as follows. In Section~\ref{sec:background}
we introduce MLBLUEs and their MOSAP and we discuss their
properties. 
We reformulate the single-output MLBLUE MOSAP as an SDP in Section~\ref{sec:SDP_reformulation}, where we also present a new formulation in which the total MLBLUE cost is minimized subject to a statistical error constraint and a new multi-objective MOSAP formulation.
In Section~\ref{sec:multi-output} we extend MLBLUEs and their MOSAPs to the
multi-output case. 
Numerical experiments are presented in Section~\ref{sec:numerical_results}, and Section~\ref{sec:conclusions} contains some concluding remarks.

\paragraph{Notation.}
We adopt the following notation:

\textbf{Scalars, vectors, matrices, sets.} We indicate scalars with lower-case letters (e.g., $a$, $b$, $c$), vectors with lower-case bold letters (e.g., $\bm{a}$, $\bm{b}$, $\bm{c}$), matrices with upper-case letters (e.g., $A$, $B$, $C$), and sets with calligraphic upper-case letters (e.g., $\mathcal{A}$, $\mathcal{B}$, $\mathcal{C}$). Random variables and vectors are indicated in the same way. We indicate with $2^{\mathcal{A}}$ the power set of $\mathcal{A}$, i.e., the set of all possible subsets of $\mathcal{A}$.

\textbf{Linear algebra.} Given a matrix $A$, we indicate with $\text{null}(A)$, $\text{col}(A)$, and $\text{row}(A)$ its nullspace, column and row space respectively. Furthermore, we denote with $A^T$, $A^{-1}$ and $A^\dagger$ its transpose, inverse, and Moore-Penrose pseudo-inverse, and we write $A\succeq 0$ to indicate that $A$ is positive semi-definite. Let $\{a_i\}_{i=1}^\ell$, $\{\bm{a}_i\}_{i=1}^\ell$, and $\{A_i\}_{i=1}^\ell$ be ordered sets of scalars, vectors and matrices respectively. We indicate with $[a_i]_{i=1}^\ell$, $[\bm{a}_i]_{i=1}^\ell$, and $[A_i]_{i=1}^\ell$ the vectors and the matrix obtained by vertically stacking the elements (assuming that matrix dimensions are compatible).

\textbf{Random input parameters.} We indicate with $\omega$ a generic input parameter which is modeled as random from a known distribution. We indicate independent identically distributed input parameter samples with different subscripts and superscripts, e.g., $\omega_i$, $\omega_j$ or $\omega_i^k$, $\omega_j^s$, where the samples are independent if $i\neq j$ or if $k \neq s$.

\textbf{Random variables, expectation, variance, covariance.} We indicate random variables and vectors as functions of the random input $\omega$, e.g. $p(\omega)$ for a scalar random variable or $\bm{p}(\omega)$ for a random vector. We indicate with $\E[\cdot]$, $\V[\cdot]$ and $C(\cdot,\cdot)$ the expectation, variance, and covariance operators. We always assume that the random quantities involved have finite variance.

\section{Background on multilevel best linear unbiased estimators}
\label{sec:background}

To set the scene for the later discussion, we introduce
the single-output MLBLUEs by Schaden and Ullmann \cite{schaden2020multilevel,schaden2021asymptotic},
and discuss their formulation of the MLBLUE MOSAP. This section summarizes the technical approach presented in \cite{schaden2020multilevel}.

In forward UQ the objective is to approximate the expectation $\E[p(\omega)]$ of a scalar QoI $p(\omega)$ which is modeled as a random variable of finite variance. For this purpose, we consider a set of $\ell$ different models $\mathcal{M} = \{\mathcal{M}_i\}_{i=1}^{\ell}$ of different accuracy/fidelities, so that for all $i=1,\dots,\ell$, model $\mathcal{M}_i$ admits the random variable $p_i(\omega)$ as output, where $p_i\approx p$. Here we indicate with $\mathcal{M}_1$ the highest-fidelity model whose output $p_1$ best approximates $p$. Model $\mathcal{M}_1$ typically has the highest cost across the model set.

\begin{remark}
	We present MLBLUEs in a \emph{multifidelity setting} \cite{peherstorfer2018survey} in which $p_1$ is taken as the best model we have access to. In this setting it is not possible to refine $p_1$ further. Since the bias error $|\E[p-p_1]|$ cannot be reduced, it is only taken into account when determining a suitable statistical error tolerance and otherwise ignored. This is in contrast to what we call a \emph{multilevel setting} \cite{giles2008multilevel} in which it is always possible to refine models further to reduce the bias. We remark that MLBLUEs -- as well as MLMC and MFMC -- can be used in both settings \cite{schaden2020multilevel,schaden2021asymptotic}.
\end{remark}

Given a set $\mathcal{G} \subseteq 2^{\mathcal{M}} \setminus \emptyset$ of allowed model groupings, MLBLUEs approximate $\E[p_1]$ by combining independent samples of the model groups in $\mathcal{G}$, where by ``sample of a model group'' we mean that the models in the group are sampled with the same input (i.e., same $\omega$). We order model groups by size and then lexicographically by model, and we indicate the $k$-th group in $\mathcal{G}$ with $\mathcal{G}_k$ for $k=1,\dots,|\mathcal{G}|$. For instance, for $\ell=3$ and all groupings allowed, we would have $\mathcal{G}= \{\mathcal{G}_k\}_{k=1}^{7}$, where
\begin{align*}
\mathcal{G}_1&=\{\mathcal{M}_1\},\quad \mathcal{G}_2 =\{\mathcal{M}_2\},\quad \mathcal{G}_3=\{\mathcal{M}_3\},
\\
\mathcal{G}_4&=\{\mathcal{M}_1,\mathcal{M}_2\},\quad \mathcal{G}_5=\{\mathcal{M}_1,\mathcal{M}_3\},\quad \mathcal{G}_6=\{\mathcal{M}_2,\mathcal{M}_3\},\\ \mathcal{G}_7&=\{\mathcal{M}_1,\mathcal{M}_2,\mathcal{M}_3\}.
\end{align*}

Now, let $\bm{\mu} = [\E[p_i]]_{i=1}^{\ell}\in\R^{\ell}$ be the vector of the expectations of all model outputs. Then a sample $\bm{p}^k(\omega)=[p_i(\omega)]_{M_i \in \mathcal{G}_k}$ from the $k$-th group $\mathcal{G}_k\in\mathcal{G}$ satisfies the linear model
\begin{align}
\label{eq:linear_model}
\bm{p}^k(\omega) = \E[\bm{p}^k] + (\bm{p}^k(\omega) - \E[\bm{p}^k]) = R_k\bm{\mu} + \beps^k(\omega),
\end{align}
where $R_k\in\R^{|\mathcal{G}_k|\times \ell}$ is a boolean restriction matrix (made up of rows of the $\ell \times \ell$ identity matrix corresponding to the elements of $\mathcal{G}_k$) such that $R_k\bm{\mu} = \E[\bm{p}^k]$, and $\beps^k=\bm{p}^k(\omega) - \E[\bm{p}^k]\in\R^{|\mathcal{G}_k|}$ is a zero-mean random vector with covariance matrix $C_k = C(\bm{p}^k,\bm{p}^k)$.

The starting point for constructing a MLBLUE is to build on \eqref{eq:linear_model} and account for multiple independent samples from all feasible groups.
We define the set of group indices $\mathcal{K}:=\{1,\dots,|\mathcal{G}|\}$, the number of samples from each group $n_k$, $k \in \mathcal{K}$, and the sample set for each group to be
$\mathcal{J}^k=\{1,\dots,n_k\}$.
The vector of these sample sizes --- the key variable to be determined in our MOSAP formulation --- is $\bm{n} := \left[  n_k \right]_{k \in \mathcal{K}} \in\N^{|\mathcal{G}|}$.
Using this notation, we can extend \eqref{eq:linear_model} by vertically stacking all samples from all groups into a vector $\bm{p}$ such that $\bm{p}=[\bm{p}^k(\omega_j^k)]_{j\in \mathcal{J}^k, k\in \mathcal{K}}$, where $\bm{p}^k(\omega_j^k)$ denotes the $j$-th sample of model group $k$ and $\bm{p}^k(\omega_j^k)$ and $\bm{p}^s(\omega_i^s)$ are independent if $s\neq k$ or if $i\neq j$. The vector $\bm{p}$ satisfies the linear model
\begin{subequations}
  \label{eq:linear_model_full}
\begin{align}
\bm{p}(\omega) &= R\bm{\mu} + \beps(\omega),\\ \text{where}\quad R &= [R_k]_{j\in \mathcal{J}^k, k\in \mathcal{K}},\quad \beps=[\beps^k(\omega_j^k)]_{j\in \mathcal{J}^k, k\in \mathcal{K}}.
\end{align}
\end{subequations}
By construction, $\beps$ is a zero-mean vector with block-diagonal covariance since it contains $n_k$ i.i.d.~samples of $\beps^k$ for all $k\in \mathcal{K}$ ($\mathcal{J}^k$ is the set of sample indices and $|\mathcal{J}^k|=n_k$). Accordingly, the matrix $R$ contains $n_k$ copies of $R_k$ for each $k \in \mathcal{K}$. Similarly, we  define
\begin{equation} \label{eq:Ceps}
  C_{\beps}:=C(\beps,\beps)=\diag(\{C_k\}_{j\in \mathcal{J}^k, k\in \mathcal{K}}),
\end{equation}
where this block-diagonal matrix contains $n_k$ copies of $C_k$, for each $k \in \mathcal{K}$.
So far $\bm{n}$ has been considered fixed. A MLBLUE is found by computing the best linear unbiased estimator (BLUE) of the linear model \eqref{eq:linear_model_full}, and then
optimizing it with respect to $\bm{n}$.

\begin{remark}
	Schaden \& Ullmann \cite{schaden2020multilevel} call the BLUE resulting for any linear model such as \eqref{eq:linear_model_full} that includes multilevel/multifidelity samples a MLBLUE. With the purpose of simplifying the exposition by reducing the number of acronyms, we here call MLBLUE only a \emph{model selection and sample allocation optimal} BLUE for $\bm{\mu}$ (termed sample allocation optimal MLBLUE or SAOB in \cite{schaden2020multilevel}).
\end{remark}

Note that there is a closed-form expression for the BLUE of \eqref{eq:linear_model_full} independently of whether $R$ and $C_{\beps}$ are full-rank or not, see e.g., \cite{gross2004general}. To proceed, we however make the following simplifying assumption that is also adopted in \cite{schaden2020multilevel}.
\begin{assumption}
	\label{ass:nonsingular_covariance}
	The covariance matrices $C_k$ are non-singular for all $k\in \mathcal{K}$.
\end{assumption}

\begin{remark}
Assumption \ref{ass:nonsingular_covariance} can be removed at the cost
of a more complex technical and algorithmic treatment, but is of
little importance in practice. Indeed, as shown in
\cite{gross2004general}, a singular $C_k$ implies that there
exists a linear combination of outputs from the different models in
the set $\mathcal{G}_k$ that is a.s.~constant. If this happens, then
at least one model is redundant.  Once the redundant models are
removed from $\mathcal{G}_k$, the resulting ensemble of models would
have a covariance matrix that is full-rank. Thus,
Assumption~\ref{ass:nonsingular_covariance} is not restrictive.
\end{remark}
\begin{remark}
In practice, the matrices $C_k$ often have to be estimated via pilot
samples.  Since a sample covariance computed with $n$ samples cannot
have more than rank $n$, one must therefore take enough samples to
ensure that the sample covariance is not artificially singular. If a
singular matrix is nevertheless obtained, one can either remove
the redundant models or to reset the zero eigenvalues to a small
positive constant.
\end{remark}

Under Assumption \ref{ass:nonsingular_covariance}, we can invoke the
generalized Gauss-Markov-Aitken theorem \cite{gross2004general} and
find the BLUE by solving the generalized least-square problem for the
estimation of the mean, which admits a closed-form solution given by
(cf.~\cite{rao2008linear})
\begin{align}
\label{eq:_MLBLUE_eqns}
\hat{\bm{\mu}} (\bm{n})= (R^TC_{\beps}^{-1}R)^{-1} R^TC_{\beps}^{-1}\bm{p}= \Psi(\bm{n})^{-1}\hat{\bm{y}}(\bm{n}),
\end{align}
where $\Psi(\bm{n})\in\R^{\ell\times \ell}$ and $\hat{\bm{y}}(\bm{n})\in
\R^\ell$ are given by
\begin{subequations}
  \label{eq:psi_y_def}
\begin{align}
  \Psi(\bm{n}) & = R^T C_{\beps}^{-1} R = \sum\limits_{k\in \mathcal{K}} n_k (R_k)^T (C_k)^{-1} R_k, \\
  \hat{\bm{y}}(\bm{n}) & = R^TC_{\beps}^{-1}\bm{p} = \sum\limits_{k\in \mathcal{K}} (R_k)^T (C_k)^{-1} \sum\limits_{j\in \mathcal{J}^k}\bm{p}^k(\omega_j^k).
\end{align}
\end{subequations}
Equation \eqref{eq:_MLBLUE_eqns} is the BLUE for the full vector $\bm{\mu}$. Since in practice we are only interested in estimating $\mu_1=\E[p_1]$, we can write the BLUE for ${\mu}_1$ and its variance as
\begin{align}
\label{eq:MLBLUE_HF}
\E[p_1]\approx \bm{e}_1^T\hat{\bm{\mu}} (\bm{n})=\bm{e}_1^T\Psi(\bm{n})^{-1}\hat{\bm{y}}(\bm{n}),\quad \V[\bm{e}_1^T\hat{\bm{\mu}}(\bm{n})] = \bm{e}_1^T\Psi(\bm{n})^{-1}\bm{e}_1,
\end{align}
where $\bm{e}_1=[1,0\dots,0]^T\in\R^{\ell}$. It can be shown \cite{gross2004general,rao2008linear} that this estimator for $\E[p_1]$ is also a BLUE and well-defined as long as $\bm{e}_1$ is in the column space of $R^T$, $\text{col}(R^T)$, which is always satisfied as long as we are sampling the high-fidelity model at least once.

To complete the construction of a MLBLUE, we must find the optimal
model selection and the optimal number of samples to draw for each
model group. Note that both problems can be addressed by finding the optimal value of $\bm{n}$ while allowing some of its entries to be zero: a model that is sampled zero times
is this way automatically discarded.
To obtain the MOSAP, we minimize the BLUE variance in
\eqref{eq:MLBLUE_HF} with respect to $\bm{n}$ for a fixed
positive computational budget $b$, i.e., we solve
\begin{align}
\label{eq:MLBLUE_MOSAP}
\min\limits_{\bm{n} \ge \bm{0}} \, \bm{e}_1^T\Psi^{-1}(\bm{n})\bm{e}_1,\quad \text{s.t.}\quad \bm{n}^T\bm{c}\leq b,
\end{align}
where $\bm{c}\in\R^{|\mathcal{G}|}$ is the cost vector such that $c_k$
is the cost of drawing one realization of model group $k$. Problem
\eqref{eq:MLBLUE_MOSAP} is typically solved by allowing $\bm{n}$ to
take real values, and then projecting the result to integers. After
continuous relaxation, problem \eqref{eq:MLBLUE_MOSAP} becomes a
weakly convex nonlinear optimization problem
\cite{schaden2020multilevel}. {In fact, it can be shown that even
  when $\Psi(\bm{n})$ is nonsingular for all $\bm{n}$, the Hessian of
  the objective function in \eqref{eq:MLBLUE_MOSAP} has rank at most
  $\ell$, typically much less than the number of model groups
  $|\mathcal{G}|$.} We remark that for the feasible set of
\eqref{eq:MLBLUE_MOSAP} to be non-empty the budget must satisfy
\begin{align}
{b\geq \min_{k :  \mathcal{M}_1\in \mathcal{G}_k} c_k,}
\end{align}
i.e., we have enough budget to sample the cheapest group containing $\mathcal{M}_1$ at least once. The resulting algorithm for constructing a MLBLUE is presented in Algorithm \ref{alg:MLBLUE}.

\begin{remark}
	We remark that problem \eqref{eq:MLBLUE_MOSAP} is only weakly convex and, as such, admits an infinite set of optimal solutions that are feasible and have the same objective value (cf.~\cite{schaden2020multilevel}). As a consequence, MLBLUEs are not unique.
\end{remark}

\begin{algorithm}[h!]
	\caption{Model and sample allocation optimal MLBLUE}
	\label{alg:MLBLUE}
	\begin{enumerate}[leftmargin=*,align=left]
		\item Given a set $\mathcal{G}$ of allowed model groupings, estimate the group correlations $\{C_k\}_{k\in\mathcal{K}}$, e.g., by taking $n_{\text{pilot}}\in\N$ pilot samples of all models.
		\item Solve the MLBLUE MOSAP \eqref{eq:MLBLUE_MOSAP} (or the improved MOSAP formulations we present in the next sections) to obtain the optimal sample vector $\bm{n}^*$.
		\item Sample the model groups according to $\bm{n}^*$ and stack the samples into a vector $\bm{p}^*$.
		\item Compute the MLBLUE as $\hat{\mu}_1^*=\bm{e}_1^T\Psi(\bm{n}^*)^{-1}R^TC_{\bm{\varepsilon}}^{-1}\bm{p}^*$, cf~equations \eqref{eq:psi_y_def} and \eqref{eq:MLBLUE_HF}.
	\end{enumerate}
\end{algorithm}

Problem \eqref{eq:MLBLUE_MOSAP} may be affected by severe
ill-conditioning. The reason is that $\Psi(\bm{n})$ is, in general,
only positive semi-definite and becomes singular (see next section)
whenever a model is discarded. Schaden and Ullmann address this
problem by adding a shift and replacing $\Psi(\bm{n})$ by
$\Psi_\delta(\bm{n}) = \Psi(\bm{n}) + \delta I$ for some $\delta>0$,
where the value of $\delta$ must be carefully chosen: a large $\delta$
artificially increases all entries of $\bm{n}$ and leads to
oversampling, while a small $\delta$ leads to
ill-conditioning. {(Although the matrices $\Psi_{\delta}(\bm{n})$
  are nonsingular for all $\bm{n}$, the rank of the Hessian of the
  objective function in \eqref{eq:MLBLUE_MOSAP} remains at most $\ell$,
  so the problem is still highly degenerate.)} We will show in the
next section how we can reformulate MLBLUEs so that no shift parameter
is required and so that no issues arise when $\Psi$ is singular or
when models are discarded.

\begin{remark}
	\label{rem:max_group_size}
	The number of possible model groupings grows exponentially fast with the number of models $\ell$, which might make the MOSAP solution challenging. For instance a model set of size $\ell>20$ would lead to $\bm{n}$ having over a million entries. A solution in this case is to restrict the set $\mathcal{G}$ of allowed groupings. For instance Schaden and Ullmann \cite{schaden2020multilevel} restrict the maximum size of the allowed groupings to a fixed integer $\kappa$ and show that in practice little accuracy or efficiency is lost by only using groups of size at most five.
\end{remark}

MLBLUEs have three advantages compared with other multilevel/multifidelity estimators. 1) MLBLUEs are provably optimal across all multilevel/multifidelity methods. 2) Unlike most multilevel/multifidelity methods, MLBLUEs are flexible in the sense that they can work with any selection of model groupings and with any model set structure. In contrast, methods such as MLMC and MFMC are restricted to specific model combinations (see Example \ref{ex:MLBLUE_flexible}). 3) The MLBLUE optimal model selection is automatically found together with the optimal sample allocation without need for a brute-force enumeration approach.

As we will see in Section \ref{sec:multi-output}, the multi-output MLBLUEs we introduce inherit the same advantages as the single-output case. We will also see how, thanks to the contributions of Sections \ref{sec:SDP_reformulation} and \ref{sec:multi-output}, we can add a fourth advantage: 4) The construction of multi-output MLBLUEs and the solution of their MOSAPs can be performed reliably and efficiently. Furthermore, we will numerically demonstrate in Section \ref{sec:numerical_results} how MLBLUE is robust to the grouping and sample restrictions arising in the common scenario in which the higher-fidelity models are prohibitive to sample.

\begin{example}
\label{ex:MLBLUE_flexible}
Let us clarify point 2) with an example. We take $\ell=3$ and, without loss of generality, we order the models by cost. Assuming that the best MLMC and MFMC estimators that can be constructed for this problem use all three models, we obtain the estimators
\begin{align*}
\hat{\mu}_{\text{MLMC}}&=\frac{1}{n_1}\sum_{i=1}^{n_1}\left(p_1(\omega_i^1)-p_2(\omega_i^1)\right) + \frac{1}{n_2}\sum_{i=1}^{n_2}\left(p_2(\omega_i^2)-p_3(\omega_i^2)\right) + \frac{1}{n_3}\sum_{i=1}^{n_3}p_3(\omega_i^3),\\
\hat{\mu}_{\text{MFMC}}&=\frac{1}{m_1}\sum_{i=1}^{m_1}p_1(\omega_i) +
\alpha_2\left( \frac{1}{m_2}\sum_{i=1}^{m_2}p_2(\omega_i) - \frac{1}{m_1}\sum_{i=1}^{m_1}p_2(\omega_i)\right) +
\alpha_3\left(\frac{1}{m_3}\sum_{i=1}^{m_3}p_3(\omega_i) - \frac{1}{m_2}\sum_{i=1}^{m_2}p_3(\omega_i)\right),
\end{align*}
where $0<n_1,n_2,n_3$, $0<m_1< m_2 < m_3$, and $\alpha_2$ and $\alpha_3$ are real parameters that depend on the covariances between the model outputs. We refer the reader to \cite{giles2015multilevel} and \cite{NgWillcox2014multifidelity} for further details on MLMC and MFMC respectively. Recalling that samples with different input parameters are independent, it is readily seen from the above estimators that MLMC and MFMC use the following model groupings:
\begin{align*}
\mathcal{G}_{\text{MLMC}} &= \left\{\{\mathcal{M}_1,\mathcal{M}_2\},\{\mathcal{M}_2,\mathcal{M}_3\},\{\mathcal{M}_3\}\right\},\\
\mathcal{G}_{\text{MFMC}} &= \left\{ \{\mathcal{M}_1,\mathcal{M}_2,\mathcal{M}_3\},\{\mathcal{M}_2,\mathcal{M}_3\},\{\mathcal{M}_3\} \right\},
\end{align*}
i.e., MLMC combines models in pairs, while MFMC samples all models together. We remark that the same holds for larger $\ell$: MLMC must work with pairs, and MFMC must be able to sample all models together. If this is not possible, e.g., because estimating correlations is too expensive or, in the multi-output case, simply because a model does not yield that QoI, then the model set must be pruned and adjusted so that an estimator can be constructed. On the other hand, MLBLUEs do not have such restrictions (nor most of the other model restrictions of these methods, cf.~\cite{giles2015multilevel,NgWillcox2014multifidelity}): it can work with any set of allowed groupings without needing to discard any models, and it will automatically find the best groups for the estimation problem at hand together with the optimal sample allocation. Generally speaking, while the model set structure could significantly affect the performance of other estimators (see e.g., \cite{gorodetsky2020generalized} for a discussion), MLBLUE always retains its optimality. This is why we say that MLBLUEs are flexible.
\end{example}

\section{Semidefinite programming reformulation of the MLBLUE model selection and sample allocation problem}
\label{sec:SDP_reformulation}

While the MLBLUE MOSAP is (weakly) convex, it is still non-trivial to solve in
practice, especially when $|\mathcal{G}|$ is large. Near-singularity of
$\Psi(\bm{n})$ causes the problem to be ill-conditioned, even when
the shift $\delta$ is added to form $\Psi_{\delta}(\bm{n})$.
Efficient methods should use second-order information, which can be
expensive to calculate. In this section, we show how the solution of the
MLBLUE MOSAP can be simplified and accelerated by reformulating
\eqref{eq:MLBLUE_MOSAP} as a semidefinite programming (SDP) problem.

Semidefinite programming is a powerful paradigm for formulating and
solving a wide range of problems in statistics, data science, and
control.  The problem class most relevant to the MOSAP is experimental
design (see for example \cite[Section~7.5]{BoyV03}) in which the
objective is some function of a matrix that is a weighted sum of
rank-1 matrices, with the weights being the variables in the
problem. (In the MOSAP, the matrices in the sum \eqref{eq:psi_y_def} that
defines $\Psi(\bm{n})$ are not rank-1, but the SDP formulation
techniques still apply.)  Efficient interior-point
algorithms have been available for SDP since the
mid-1990s, and are now included in commercial packages such as MOSEK
\cite{andersen2000mosek} and open-source packages such as \texttt{CVXOPT} \cite{cvxopt}.

The first step in the reformulation is to replace $\Psi^{-1}$ with the Moore-Penrose pseudo-inverse $\Psi^\dagger$ so that
\begin{align}
\hat{\bm{\mu}} = \Psi^\dagger\hat{\bm{y}},\quad C(\hat{\bm{\mu}},\hat{\bm{\mu}})=\Psi^\dagger.
\end{align}
This is a standard procedure whenever $\Psi$ is singular and still leads to a BLUE for $\bm{\mu}$ under Assumption \ref{ass:nonsingular_covariance} \cite{zyskind1969best,rao2008linear}, but has the advantage of not requiring the $\delta$ shift parameter used in \cite{schaden2020multilevel} (cf.~Section \ref{sec:background}) without leading to ill-conditioning or ill-posedness of the resulting MOSAP. However, using the pseudo-inverse on its own is still not sufficient to ensure that $\bm{e}_1^T\bm{\mu}$ is also BLUE. As previously said, we also need $\bm{e}_1$ to be in the column space of $R^T$ \cite{rao2008linear}. This property can be imposed by requiring that model $\mathcal{M}_1$ is sampled at least once. Problem \eqref{eq:MLBLUE_MOSAP} can then be equivalently reformulated as
\begin{align}
\label{eq:MLBLUE_MOSAP_1}
\min\limits_{\bm{n} \ge \bm{0}}\bm{e}_1^T\Psi^{\dagger}(\bm{n})\bm{e}_1,\quad \text{s.t.}\quad \bm{n}^T \bm{c}\leq b,\quad \bm{n}^T \bm{h} \geq 1,
\end{align}
where $\bm{h}\in\R^{|\mathcal{G}|}$ is a known boolean vector such that $h_k=1$ if and only if $\mathcal{M}_1\in \mathcal{G}_k$. The new constraint on the right-hand side imposes that the high-fidelity model is sampled at least once and thus ensures that the estimator is a well-posed BLUE. Since it is necessary for well-posedness of the resulting estimator, this constraint does not exclude any sensible solution of the MOSAP. In particular, the standard Monte Carlo estimator is still feasible for problem \eqref{eq:MLBLUE_MOSAP_1}, ensuring that the feasible set is non-empty (also see Theorem 3.6 in \cite{schaden2020multilevel}).

Before proceeding we need two auxiliary results. Under Assumption \ref{ass:nonsingular_covariance}, Schaden and Ullmann \cite{schaden2020multilevel} proved that if $\bm{p}$ contains samples from all $\ell$ models, then $\Psi$ is positive definite. We further extend their result to show that $\Psi$ can only be positive definite if all models are sampled, as well as providing a characterization of the null-space of $\Psi$.
\begin{theorem}
	\label{th:Psi_properties}
	Let Assumption \ref{ass:nonsingular_covariance} hold. Then $\Psi$ is positive definite if and only if $\bm{p}$ contains samples from all $\ell$ models. More specifically, let $\mathcal{V}_k=\{\bm{v}\in\R^\ell$ : $v_i=0$ $\forall i$ s.t. $\mathcal{M}_i\in \mathcal{G}_k\}$, then the null-space of $\Psi$ satisfies $\textnormal{null}(\Psi)=\bigcap_{n_k>0} \mathcal{V}_k$. In particular, if model $\mathcal{M}_i\in\mathcal{M}$ is never sampled, then the $i$-th row and column of $\Psi$ and $\Psi^\dagger$ are zero.
\end{theorem}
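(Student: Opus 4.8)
The plan is to work directly with the quadratic form associated with $\Psi(\bm{n})$ and to exploit that, after dropping the vanishing terms with $n_k=0$, it is a nonnegatively weighted sum of the positive definite pieces $R_k^T C_k^{-1} R_k$. First I would fix an arbitrary $\bm{v}\in\R^\ell$ and compute
\begin{align*}
\bm{v}^T\Psi(\bm{n})\bm{v} = \sum_{k:\,n_k>0} n_k\,(R_k\bm{v})^T C_k^{-1}(R_k\bm{v}).
\end{align*}
Since $C_k^{-1}$ is positive definite by Assumption~\ref{ass:nonsingular_covariance} and each $n_k>0$ in the sum, every summand is nonnegative, and the total vanishes if and only if $R_k\bm{v}=\bm{0}$ for every $k$ with $n_k>0$. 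Because $R_k$ merely extracts the entries of $\bm{v}$ indexed by the models in $\mathcal{G}_k$, the condition $R_k\bm{v}=\bm{0}$ is exactly $\bm{v}\in\mathcal{V}_k$. This single observation underlies everything that follows.

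Next I would convert this into the null-space statement. As $\Psi(\bm{n})$ is symmetric positive semidefinite, its null-space coincides with the set of $\bm{v}$ annihilating the quadratic form, so the previous step immediately gives $\textnormal{null}(\Psi)=\bigcap_{n_k>0}\mathcal{V}_k$, which is the second claim. For the first claim I would note that this intersection consists exactly of those $\bm{v}$ whose entries vanish on the index set $S=\{i:\mathcal{M}_i\in\mathcal{G}_k\text{ for some }k\text{ with }n_k>0\}$ of sampled models; hence $\textnormal{null}(\Psi)=\{\bm{0}\}$ --- equivalently $\Psi$ is positive definite --- if and only if $S=\{1,\dots,\ell\}$, i.e. if and only if every model appears in at least one sampled group. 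This both recovers and strengthens the Schaden--Ullmann direction, since it yields the ``only if'' as well.

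For the final claim, if $\mathcal{M}_i$ is never sampled then $i\notin S$, so $\bm{e}_i\in\bigcap_{n_k>0}\mathcal{V}_k=\textnormal{null}(\Psi)$; thus $\Psi\bm{e}_i=\bm{0}$ and, by symmetry of $\Psi$, its $i$-th row and column both vanish. To transfer this to $\Psi^\dagger$ I would use that a symmetric matrix and its Moore--Penrose pseudo-inverse share the same null-space: a spectral decomposition $\Psi=Q\Lambda Q^T$ gives $\Psi^\dagger=Q\Lambda^\dagger Q^T$, again symmetric with $\textnormal{null}(\Psi^\dagger)=\textnormal{null}(\Psi)$, so the same reasoning yields the zero $i$-th row and column of $\Psi^\dagger$.

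I expect no serious obstacle: the argument reduces to the fact that a nonnegatively weighted sum of the definite forms $R_k^T C_k^{-1} R_k$ is definite precisely on the span of the sampled coordinates. The only point requiring care is the bookkeeping that rewrites $R_k\bm{v}=\bm{0}$ as membership in $\mathcal{V}_k$ and the intersection as the sampled-coordinate condition; keeping the index set $S$ explicit makes this transparent. The sole external fact invoked is the shared null-space of a symmetric positive semidefinite matrix and its pseudo-inverse, which is immediate from the spectral decomposition.
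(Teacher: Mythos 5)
Your proposal is correct and follows essentially the same route as the paper's own proof: both analyze the quadratic form $\bm{v}^T\Psi\bm{v}=\sum_{k}n_k(R_k\bm{v})^TC_k^{-1}(R_k\bm{v})$, identify $\textnormal{null}(\Psi)=\bigcap_{n_k>0}\mathcal{V}_k$ via $R_k\bm{v}=\bm{0}\Leftrightarrow\bm{v}\in\mathcal{V}_k$, and handle the unsampled-model case through $\bm{e}_i$ together with the fact that $\Psi$ and $\Psi^\dagger$ share a null-space. Your explicit index set $S$ of sampled models and the spectral-decomposition justification for $\textnormal{null}(\Psi^\dagger)=\textnormal{null}(\Psi)$ are minor presentational refinements of the same argument, not a different approach.
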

\begin{proof}
  Let $B_k=(R_k)^T (C_k)^{-1} R_k$ for all $k$. Since $C_k$ is positive definite for all $k$, then $B_k$ is positive semidefinite, and so is $\Psi$, cf.~\eqref{eq:psi_y_def}.
  Since $(C_k)^{-1}$ is positive definite and $R_k$ has full row rank,  $\bm{v} \in \R^\ell$ is in the null-space of $B_k$ if and only if $R_k\bm{v}=\bm{0}$, which only happens if $\bm{v}\in \mathcal{V}_k$. We then conclude that $\Psi$ is singular if and only if there exists $\bm{v}\in\R^\ell$ such that (recall: $\mathcal{K}=\{1,\dots,|\mathcal{G}|\}$)
	\begin{align}
	\label{eq:_thm1_aux}
	\bm{v}^T\Psi\bm{v} = \sum\limits_{k\in \mathcal{K}} n_k \bm{v}^TB_k\bm{v} = 0\quad\Leftrightarrow\quad \forall k\in \mathcal{K}\ \text{either}\ n_k = 0\ \text{or}\  \bm{v}\in \mathcal{V}_k,
	\end{align}
	since $n_k$ and $\bm{v}^TB_k\bm{v}$ are both non-negative for all $k$. In particular, this means that the null-space of $\Psi$ is given by $\bigcap_{n_k>0} \mathcal{V}_k$.
	
	We first prove that if a model is not sampled, then $\Psi$ is singular. Let $\bm{e}_i$ with $(\bm{e}_i)_j = \delta_{ij}$, ($\delta_{ij}$ is the Kronecker delta) be the canonical basis vector. If a model $\mathcal{M}_i$ is not sampled, we have that $n_k=0$ for all $k$ such that $\mathcal{M}_i\in \mathcal{G}_k$, and $\bm{e}_i\in \mathcal{V}_k$ for all $k$ such that $\mathcal{M}_i \notin \mathcal{G}_k$, therefore $\bm{e}_i$ is in the nullspace of $\Psi$, which is therefore singular with zeros in its $i$-th row and column. The same property holds for $\Psi^\dagger$, since $\Psi$ and $\Psi^\dagger$ have the same null-space.
	
	We now prove the reverse statement and we show that if all models are sampled, then $\Psi$ is invertible. In this case, for all $k$ corresponding to sampled groups we have $n_k>0$, and it is readily seen that since we are sampling all models $\bigcap_{n_k>0} \mathcal{V}_k = \{\bm{0}\}$. Hence the null-space of $\Psi$ is trivial and $\Psi$ is invertible.
\end{proof}

The second auxiliary ingredient is the following result by Albert \cite{albert1969conditions}:
\begin{theorem}[Result (i) in Theorem 1 in \cite{albert1969conditions}]
	\label{th:Schur_pseudoinverse}
	Let $A\in\R^{n\times n}$, $B\in \R^{m\times m}$ be symmetric, let $C\in\R^{n\times m}$, and let
	\begin{align}
	\Phi = \left[\begin{array}{lc}
	A & C\\
	C^T & B
	\end{array}\right].
	\end{align}
	Then $\Phi \succeq 0$ if and only if $A\succeq 0$, $AA^\dagger C = C$ and $B - C^TA^\dagger C \succeq 0$.
\end{theorem}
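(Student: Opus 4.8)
The plan is to prove both implications simultaneously through a single generalized block $LDL^T$ factorization of $\Phi$, so that positive semidefiniteness transfers between $\Phi$ and a block-diagonal matrix by a congruence. First I would collect the standard facts about the pseudoinverse of a symmetric matrix: for $A=A^T$ the matrix $A^\dagger$ is symmetric, the two products coincide ($AA^\dagger=A^\dagger A$), this common matrix is the orthogonal projector onto $\text{col}(A)$, and $AA^\dagger A = A$, $A^\dagger A A^\dagger = A^\dagger$. With these in hand, the hypothesis $AA^\dagger C = C$ is exactly the geometric statement that every column of $C$ lies in $\text{col}(A)$, equivalently $\text{null}(A)\subseteq\text{null}(C^T)$; transposing, it also yields $C^T A^\dagger A = C^T$.

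The key step is to introduce, whenever $AA^\dagger C = C$ holds, the generalized Schur complement $S := B - C^T A^\dagger C$ and the unit lower-triangular (hence invertible) matrix $L := \bigl[\begin{smallmatrix} I & 0 \\ C^T A^\dagger & I\end{smallmatrix}\bigr]$, and to verify the identity
\begin{align*}
\Phi = L \begin{bmatrix} A & 0 \\ 0 & S\end{bmatrix} L^T.
\end{align*}
The $(1,1)$ and $(2,2)$ blocks are immediate, while the off-diagonal block reproduces $C$ precisely because $AA^\dagger C = C$ (and its transpose $C^T A^\dagger A = C^T$). Since $L$ is invertible, $\Phi$ and the block-diagonal matrix are congruent, and congruence preserves positive semidefiniteness.

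The reverse implication is then immediate: if $A\succeq 0$, $AA^\dagger C = C$, and $S\succeq 0$, the block-diagonal matrix is positive semidefinite and the factorization yields $\Phi\succeq 0$. For the forward implication, suppose $\Phi\succeq 0$. Testing the quadratic form on vectors whose second block is zero gives $\bm{x}^T A\bm{x}\ge 0$ for all $\bm{x}$, so $A\succeq 0$. The crux is to recover $AA^\dagger C = C$. Given $\bm{x}\in\text{null}(A)$ (so $\bm{x}^T A\bm{x}=0$ since $A\succeq 0$) and arbitrary $\bm{y}\in\R^m$, I would evaluate $\Phi$ on $[t\bm{x}^T,\,\bm{y}^T]^T$: the result is affine in the scalar $t$, with slope $2\,\bm{x}^T C\bm{y}$, and nonnegativity for every $t\in\R$ forces the slope to vanish. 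As $\bm{y}$ is arbitrary this gives $C^T\bm{x}=\bm{0}$, so $\text{null}(A)\subseteq\text{null}(C^T)$, i.e.\ $AA^\dagger C = C$. With this established the factorization above is now valid, and congruence through the invertible $L$ shows the block-diagonal matrix is positive semidefinite; restricting to its lower block gives $S = B - C^T A^\dagger C\succeq 0$.

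The main obstacle is the forward recovery of $AA^\dagger C = C$: unlike the nonsingular case, where the Schur complement appears automatically from $A^{-1}$, here one must first rule out any coupling between $\text{null}(A)$ and $\R^m$ before the factorization is even well-defined. The affine-in-$t$ argument above is the cleanest way to do this, and the pseudoinverse identities $AA^\dagger A = A$ and $A^\dagger A A^\dagger = A^\dagger$, together with the symmetry of $A^\dagger$, are exactly what make the off-diagonal block of the factorization collapse back to $C$.
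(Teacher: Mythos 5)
Your proof is correct, but note that the paper itself offers no proof of this statement: it is imported verbatim as Result (i) of Theorem 1 in Albert's 1969 paper, and is used as a black box in the proof of Theorem \ref{th:SDP_equivalence}. So there is no in-paper argument to compare against; what you have written is a self-contained proof of the cited result. Your route --- the generalized block $LDL^T$ congruence
\begin{align*}
\Phi = \begin{bmatrix} I & 0 \\ C^T A^\dagger & I\end{bmatrix}\begin{bmatrix} A & 0 \\ 0 & B - C^T A^\dagger C\end{bmatrix}\begin{bmatrix} I & A^\dagger C \\ 0 & I\end{bmatrix},
\end{align*}
valid exactly when $AA^\dagger C = C$ --- is the standard modern treatment of the generalized Schur complement, and all the steps check out: the off-diagonal blocks collapse to $C$ and $C^T$ using $AA^\dagger C = C$ and the symmetry of $A^\dagger A = AA^\dagger$, the $(2,2)$ block uses $A^\dagger A A^\dagger = A^\dagger$, and congruence by the invertible unit-triangular factor transfers semidefiniteness in both directions. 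You also correctly identified and handled the one genuinely delicate point in the forward direction: before the factorization exists one must rule out coupling between $\text{null}(A)$ and the second block, and your affine-in-$t$ evaluation of the quadratic form on $[t\bm{x}^T, \bm{y}^T]^T$ with $\bm{x}\in\text{null}(A)$ (where $A\succeq 0$ guarantees $A\bm{x}=\bm{0}$, so the quadratic term vanishes identically) cleanly forces $C^T\bm{x}=\bm{0}$, i.e.\ $\text{null}(A)\subseteq\text{null}(C^T)$, which is equivalent to the range condition $AA^\dagger C = C$. This is essentially the same style of argument Albert used (his original proof also proceeds through pseudoinverse projection identities rather than a perturbation $A+\delta I$ and a limit), so your proposal can be regarded as a clean reconstruction of the cited result rather than a departure from it.
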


Let $t\in\R$ be a scalar slack variable. We are now ready to reformulate \eqref{eq:MLBLUE_MOSAP_1} as a SDP as follows:
\begin{align}
\label{eq:MLBLUE_MOSAP_SDP}
\min\limits_{t,\bm{n} \ge \bm{0}}\ t,\quad\text{s.t.}\quad \Phi(t,\bm{n}) = \left[\begin{array}{cc}
\Psi(\bm{n}) & \bm{e}_1\\
\bm{e}_1^T & t
\end{array}\right]\succeq 0,\quad \bm{n}^T \bm{c}\leq b,\quad \bm{n}^T \bm{h} \geq 1.
\end{align}
This is indeed an SDP since $\Psi(\bm{n})$ is linear in $\bm{n}$ and thus $\Phi(t,\bm{n})$ is linear in $(t,\bm{n})$.

\begin{theorem}
	\label{th:SDP_equivalence}
	Let Assumption \ref{ass:nonsingular_covariance} hold. Then problems \eqref{eq:MLBLUE_MOSAP_1} and \eqref{eq:MLBLUE_MOSAP_SDP} are equivalent.
\end{theorem}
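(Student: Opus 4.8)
The plan is to invoke Albert's Schur-complement criterion (Theorem~\ref{th:Schur_pseudoinverse}) with the block identification $A = \Psi(\bm{n})$, $C = \bm{e}_1$, and $B = t$ (a scalar $1\times 1$ block). This rewrites the matrix inequality $\Phi(t,\bm{n})\succeq 0$ as the conjunction of three conditions: $\Psi(\bm{n})\succeq 0$, the range-compatibility identity $\Psi\Psi^\dagger\bm{e}_1 = \bm{e}_1$, and the scalar inequality $t - \bm{e}_1^T\Psi^\dagger(\bm{n})\bm{e}_1 \geq 0$. I would then argue that, on the feasible set shared by the two problems, the first two conditions hold automatically, so that $\Phi\succeq 0$ collapses to $t \geq \bm{e}_1^T\Psi^\dagger(\bm{n})\bm{e}_1$; minimizing $t$ then recovers exactly the objective of \eqref{eq:MLBLUE_MOSAP_1}.

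The first condition, $\Psi(\bm{n})\succeq 0$, is immediate from Theorem~\ref{th:Psi_properties} (equivalently from the decomposition $\Psi = \sum_{k\in\mathcal{K}} n_k B_k$ with each $B_k\succeq 0$). The delicate step---and the one I expect to be the crux---is the range compatibility $\Psi\Psi^\dagger\bm{e}_1 = \bm{e}_1$. Since $\Psi$ is symmetric, $\Psi\Psi^\dagger$ is the orthogonal projector onto $\text{col}(\Psi) = \text{null}(\Psi)^\perp$, so this identity holds if and only if $\bm{e}_1\in\text{col}(\Psi)$, i.e., $v_1 = 0$ for every $\bm{v}\in\text{null}(\Psi)$. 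Here I would appeal to the null-space characterization $\text{null}(\Psi) = \bigcap_{n_k>0}\mathcal{V}_k$ furnished by Theorem~\ref{th:Psi_properties}: the constraint $\bm{n}^T\bm{h}\geq 1$ guarantees that some group $\mathcal{G}_k$ with $n_k>0$ contains $\mathcal{M}_1$, and every $\bm{v}$ in the intersection lies in that $\mathcal{V}_k$, forcing $v_1 = 0$. Hence $\bm{e}_1\in\text{col}(\Psi)$ and the compatibility condition is satisfied for all feasible $\bm{n}$. This is precisely the point at which the extra constraint introduced in \eqref{eq:MLBLUE_MOSAP_1} does its work, and it is essential that it appear in both formulations.

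With the first two conditions in hand, Albert's theorem reduces $\Phi(t,\bm{n})\succeq 0$ to the single scalar inequality $t \geq \bm{e}_1^T\Psi^\dagger(\bm{n})\bm{e}_1$. For any feasible $\bm{n}$, the inner minimization over $t$ of the SDP objective is therefore attained at $t = \bm{e}_1^T\Psi^\dagger(\bm{n})\bm{e}_1$, which is exactly the objective of \eqref{eq:MLBLUE_MOSAP_1}. Since the two problems share the constraints $\bm{n}\geq\bm{0}$, $\bm{n}^T\bm{c}\leq b$, and $\bm{n}^T\bm{h}\geq 1$, minimizing over $\bm{n}$ as well shows that the two optimal values coincide and that $\bm{n}^*$ solves \eqref{eq:MLBLUE_MOSAP_1} if and only if $(\,\bm{e}_1^T\Psi^\dagger(\bm{n}^*)\bm{e}_1,\ \bm{n}^*)$ solves \eqref{eq:MLBLUE_MOSAP_SDP}. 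No additional well-posedness hypotheses are needed, since Assumption~\ref{ass:nonsingular_covariance} together with the sampling constraint is exactly what underwrites both the pseudo-inverse objective and the range condition used above.
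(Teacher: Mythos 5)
Your proposal is correct and follows essentially the same route as the paper's proof: it invokes Albert's criterion (Theorem~\ref{th:Schur_pseudoinverse}) to split $\Phi(t,\bm{n})\succeq 0$ into the same three conditions, discharges the range condition $\Psi\Psi^\dagger\bm{e}_1=\bm{e}_1$ via the null-space characterization of Theorem~\ref{th:Psi_properties} together with the constraint $\bm{n}^T\bm{h}\geq 1$, and concludes by minimizing out the slack variable $t$. The argument is complete and matches the paper's step for step.
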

\begin{proof}
	By Theorem \ref{th:Schur_pseudoinverse} we have that
	\begin{align}
	\label{eq:_thm_equivalence_MOSAP_SDP_1}
	\Phi\succeq 0\quad \Leftrightarrow\quad \Psi \succeq 0,\ \  \Psi\Psi^\dagger\bm{e}_1 = \bm{e}_1,\ \ t \geq \bm{e}_1^T\Psi^\dagger\bm{e}_1.
	\end{align}
	The first of the three conditions is automatically satisfied since $\Psi$ is non-negative definite. To complete the proof it is sufficient to prove that the second condition also always holds. In fact, this would imply that $\Phi\succeq 0$ if and only if $t\geq \bm{e}_1^T\Psi^\dagger\bm{e}_1$, and taking the minimum of the latter expression over $t$ would then yield the thesis, since $\min_{t} t = \bm{e}_1^T\Psi^\dagger\bm{e}_1$.
        Note that the spaces $\text{col}(\Psi)$ and $\text{row}(\Psi)$ coincide since $\Psi$ is symmetric. For the second condition on the right-hand side of \eqref{eq:_thm_equivalence_MOSAP_SDP_1} to hold it is sufficient that $\bm{e}_1\in \text{col}(\Psi)$ since $\Psi\Psi^\dagger$ is the orthogonal projector onto $\text{col}(\Psi)$. Note that since $\text{col}(\Psi)=\text{row}(\Psi)=\text{null}(\Psi)^\perp$, it is sufficient to demonstrate that
	$\bm{e}_1$ is orthogonal to $\text{null}(\Psi)$, which, by Theorem \ref{th:Psi_properties}, is given by $\text{null}(\Psi)=\bigcap_{n_k>0} \mathcal{V}_k$. We observe that the last constraint in \eqref{eq:MLBLUE_MOSAP_1} and \eqref{eq:MLBLUE_MOSAP_SDP} ensures that model $\mathcal{M}_1$ is sampled at least once. Hence, if we let $\bar{k}$ index the sampled groups (i.e., those for which $n_{\bar{k}}>0$) that include $\mathcal{M}_1$ we have
	\begin{align}
	\text{null}(\Psi) = \bigcap_{n_k>0} \mathcal{V}_k \subseteq\ \bigcap_{\bar{k}} \mathcal{V}_{\bar{k}} \subseteq \{\bm{v}\in\R^\ell\ :\ v_1 = 0 \} = \text{span}(\bm{e}_1)^\perp.
	\end{align}
	Hence $\bm{e}_1\in\text{null}(\Psi)^\perp=\text{row}(\Psi)=\text{col}(\Psi)$, and hence $\Psi\Psi^\dagger\bm{e}_1=\bm{e}_1$ and the theorem is proved.
\end{proof}

We have now proved that the MLBLUE MOSAP can be formulated as the SDP \eqref{eq:MLBLUE_MOSAP_SDP}, which can be solved reliably and efficiently. In practice it is also often useful to solve an alternative MOSAP in which the user prescribes a statistical error tolerance $\eps^2$ and minimizes the computational expense subject to this restriction. This new MOSAP reads
\begin{align}
\label{eq:MLBLUE_MOSAP_2}
\min\limits_{\bm{n} \ge \bm{0}} \, \bm{n}^T \bm{c},\quad \text{s.t.}\quad \bm{e}_1^T\Psi^{\dagger}(\bm{n})\bm{e}_1\leq \eps^2,\quad \bm{n}^T \bm{h} \geq 1,
\end{align}
which is a straightforward modification of \eqref{eq:MLBLUE_MOSAP_1}. Problem \eqref{eq:MLBLUE_MOSAP_2} appears harder to solve than \eqref{eq:MLBLUE_MOSAP_1} since the nonlinear variance function now appears as a constraint. Again, we can reformulate \eqref{eq:MLBLUE_MOSAP_2} as a SDP as follows
\begin{equation}
\label{eq:MLBLUE_MOSAP_SDP_2}
\min\limits_{\bm{n} \ge \bm{0}} \, \bm{n}^T \bm{c},\quad\text{s.t.}\quad \Phi(\eps^2,\bm{n}) = \left[\begin{array}{cc}
\Psi(\bm{n}) & \bm{e}_1\\
\bm{e}_1^T & \eps^2
\end{array}\right]\succeq 0,\quad \bm{n}^T \bm{h} \geq 1.
\end{equation}
\begin{corollary}
	Let Assumption \ref{ass:nonsingular_covariance} hold. Then problems \eqref{eq:MLBLUE_MOSAP_2} and \eqref{eq:MLBLUE_MOSAP_SDP_2} are equivalent.
\end{corollary}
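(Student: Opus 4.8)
The plan is to mirror the argument used in the proof of Theorem~\ref{th:SDP_equivalence} almost verbatim, exploiting the fact that \eqref{eq:MLBLUE_MOSAP_SDP_2} is obtained from \eqref{eq:MLBLUE_MOSAP_SDP} simply by freezing the slack variable $t$ at the prescribed value $\eps^2$ and swapping the roles of objective and constraint. Since both \eqref{eq:MLBLUE_MOSAP_2} and \eqref{eq:MLBLUE_MOSAP_SDP_2} have the \emph{same} objective function $\bm{n}^T\bm{c}$ and the \emph{same} linear constraints $\bm{n}\ge\bm{0}$ and $\bm{n}^T\bm{h}\ge 1$, it suffices to show that, on the set $\{\bm{n}\ge\bm{0} : \bm{n}^T\bm{h}\ge 1\}$, the remaining constraints coincide, i.e.\ that
\begin{align}
\label{eq:cor_equiv}
\Phi(\eps^2,\bm{n})\succeq 0 \quad\Leftrightarrow\quad \bm{e}_1^T\Psi^\dagger(\bm{n})\bm{e}_1\leq \eps^2.
\end{align}
Once \eqref{eq:cor_equiv} is established, the two feasible sets are identical and, with identical objectives, the problems are equivalent.

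First I would apply Albert's result (Theorem~\ref{th:Schur_pseudoinverse}) to the block matrix $\Phi(\eps^2,\bm{n})$ with $A=\Psi(\bm{n})$, $C=\bm{e}_1$ and $B=\eps^2$, obtaining exactly as in \eqref{eq:_thm_equivalence_MOSAP_SDP_1} that $\Phi(\eps^2,\bm{n})\succeq 0$ holds if and only if the three conditions $\Psi\succeq 0$, $\Psi\Psi^\dagger\bm{e}_1=\bm{e}_1$, and $\eps^2\geq \bm{e}_1^T\Psi^\dagger\bm{e}_1$ are all satisfied. The first condition is automatic because $\Psi(\bm{n})$ is positive semidefinite for every $\bm{n}\ge\bm{0}$, cf.~the opening lines of the proof of Theorem~\ref{th:Psi_properties}. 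The third condition is precisely the variance constraint $\bm{e}_1^T\Psi^\dagger(\bm{n})\bm{e}_1\leq\eps^2$ appearing in \eqref{eq:MLBLUE_MOSAP_2}. Thus \eqref{eq:cor_equiv} reduces to verifying that the second condition $\Psi\Psi^\dagger\bm{e}_1=\bm{e}_1$, i.e.\ $\bm{e}_1\in\text{col}(\Psi)$, is automatically met on the feasible set.

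The only point requiring care is therefore this second condition, and I would dispatch it by reusing the null-space characterization from Theorem~\ref{th:Psi_properties} exactly as in Theorem~\ref{th:SDP_equivalence}. The shared constraint $\bm{n}^T\bm{h}\ge 1$ forces $\mathcal{M}_1$ to be sampled at least once, so at least one sampled group $\mathcal{G}_{\bar k}$ (with $n_{\bar k}>0$) contains $\mathcal{M}_1$; consequently $\text{null}(\Psi)=\bigcap_{n_k>0}\mathcal{V}_k\subseteq\mathcal{V}_{\bar k}\subseteq\{\bm{v}\in\R^\ell : v_1=0\}=\text{span}(\bm{e}_1)^\perp$, whence $\bm{e}_1\in\text{null}(\Psi)^\perp=\text{col}(\Psi)$ and $\Psi\Psi^\dagger\bm{e}_1=\bm{e}_1$. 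This closes the gap and establishes \eqref{eq:cor_equiv}. I do not anticipate any genuine obstacle here: the entire argument is inherited from Theorem~\ref{th:SDP_equivalence}, and the only conceptual check is that the constraint $\bm{n}^T\bm{h}\ge 1$ (rather than the objective) is what guarantees $\bm{e}_1\in\text{col}(\Psi)$ — which is indeed the case since that constraint is common to both formulations. This is exactly why the statement is phrased as a corollary rather than a standalone theorem.
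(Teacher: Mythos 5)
Your proposal is correct and follows essentially the same route as the paper: the paper's own proof simply observes that the equivalence $\Phi(t,\bm{n})\succeq 0 \Leftrightarrow \bm{e}_1^T\Psi^{\dagger}(\bm{n})\bm{e}_1\leq t$ was already established in the proof of Theorem~\ref{th:SDP_equivalence} (on the feasible set where $\bm{n}^T\bm{h}\geq 1$) and sets $t=\eps^2$, which is exactly the argument you re-derive in expanded form. Your explicit remark that the constraint $\bm{n}^T\bm{h}\geq 1$, common to both formulations, is what guarantees $\bm{e}_1\in\text{col}(\Psi(\bm{n}))$ is precisely the point the paper's citation implicitly relies on.
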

\begin{proof}
	In the proof of Theorem \ref{th:SDP_equivalence} we have already shown that $\Phi(t,\bm{n})\succeq 0$ if and only if $\bm{e}_1^T\Psi^{\dagger}(\bm{n})\bm{e}_1\leq t$. Setting $t=\eps^2$ yields the thesis.
\end{proof}
Note that \eqref{eq:MLBLUE_MOSAP_SDP_2} is also a SDP and it is as easy to solve as
\eqref{eq:MLBLUE_MOSAP_SDP}. {In fact, the two problems are strongly related:
  by introducing a parameter
  $\tau>0$ we can trace out the Pareto frontier defined by the two
  objectives of statistical error and cost as follows:
\begin{equation}
\label{eq:MLBLUE_MOSAP_SDP_3}
\min\limits_{t, \bm{n} \ge \bm{0}} \, t + \tau \bm{n}^T \bm{c},\quad\text{s.t.}\quad \Phi(t,\bm{n}) = \left[\begin{array}{cc}
\Psi(\bm{n}) & \bm{e}_1\\
\bm{e}_1^T & t
\end{array}\right]\succeq 0,\quad \bm{n}^T \bm{h} \geq 1.
\end{equation}
There are values of $\tau$ that produce the solutions of
\eqref{eq:MLBLUE_MOSAP_SDP} and \eqref{eq:MLBLUE_MOSAP_SDP_2}. This
problem \eqref{eq:MLBLUE_MOSAP_SDP_3} is also a SDP
and can be solved as easily as the other two formulations.}

\begin{remark}
	It was shown in \cite{schaden2020multilevel} that some approximate control variate methods, such as the ACV-IS estimator of \cite{gorodetsky2020generalized}, are BLUEs for a specific set of allowed model groupings. However, it is nontrivial to solve the sample allocation problem in \cite{gorodetsky2020generalized} numerically. Formulations \eqref{eq:MLBLUE_MOSAP_SDP} and \eqref{eq:MLBLUE_MOSAP_SDP_2} can be used to accelerate and simplify the set up of {\em any} estimator that is also a MLBLUE.
\end{remark}

\begin{remark}
	The SDP formulations of MOSAP in \eqref{eq:MLBLUE_MOSAP_SDP}, \eqref{eq:MLBLUE_MOSAP_SDP_2}, and \eqref{eq:MLBLUE_MOSAP_SDP_3} can incorporate additional linear constraints (e.g., on the number of samples) while remaining SDPs. For instance, in Section \ref{sec:numerical_results} we consider a scenario in which the high-fidelity models are prohibitive to sample and the maximum number of high-fidelity samples is thus restricted.  
\end{remark}

An advantage of formulating the MOSAPs as SDPs is that they can easily
be extended to the multiple output case without impacting their
numerical solvability, as we show next.

\section{Multi-output multilevel best linear unbiased estimators}
\label{sec:multi-output}

In this section we extend MLBLUEs to the multi-output case and introduce SDP formulations for their MOSAPs. We tackle the multi-output estimation problem by simultaneously constructing $m$ MLBLUEs, one for each output, and setting up two multiple-output SDP MOSAPs, the first minimizing the total cost, and the second minimizing the maximum statistical error.

Consider a single model set $\mathcal{M} = \{\mathcal{M}_i\}_{i=1}^{\ell}$, corresponding now to $m$ different output sets indexed by $s=1,\dots,m$, so that $p_i^s$ is the output of the $i$-th model corresponding to the $s$-th QoI. We do not assume that all the outputs $p_i^s$ can actually be sampled, since models might not necessarily produce all outputs. For this reason, we construct $m$ separate model coupling sets $\mathcal{G}^s\subseteq 2^{\mathcal{M}}\setminus\emptyset$ for $s=1,\dots,m$, each containing the possible or feasible model groupings for the $s$-th QoI.

We now simultaneously construct $m$ MLBLUEs, each starting from its feasible set $\mathcal{G}^s$ obtaining the estimators
\begin{align}
\label{eq:MO_MLBLUE_HF}
\E[p_1^s]\approx \bm{e}_1^T\hat{\bm{\mu}}^s=\bm{e}_1^T\Psi^\dagger_s\hat{\bm{y}}_s,\quad \V[\bm{e}_1^T\hat{\bm{\mu}}^s] = \bm{e}_1^T\Psi^\dagger_s\bm{e}_1,\quad s=1,\dots,m,
\end{align}
where $\hat{\bm{y}}_s$, $\Psi_s$ are constructed in the same way as in the single-output case. We still consider each $\Psi_s$ to be of size $\ell$-by-$\ell$ even if not all $\ell$ models are used: if model $i$ cannot be used to estimate the $s$-th QoI, we simply set the $i$-th row and column of $\Psi_s$ to zero. With this strategy, we can incorporate models that output only some of the QoIs within the standard MLBLUE construction, so that optimality is still ensured. Note that a $\Psi_s$ constructed this way remains consistent with the properties of the matrix $\Psi$ discussed in Theorem~\ref{th:Psi_properties}.

To set up all $m$ estimators concurrently, we need to design and solve a multi-output MOSAP. We have three options: 1) fix a computational budget and minimize the maximum estimator variance. 2) Fix a statistical error tolerance $\eps^2_s$ for each output and minimize the total cost. 3) Trace out a Pareto frontier defined by the two objectives of maximum estimator variance and cost. The resulting formulations are as follows.

\paragraph{1) Minimizing the maximum estimator variance for a fixed budget.} We write the multi-output MOSAP as
\begin{align}
\label{eq:MLBLUE_MOMOSAP_std}
\min\limits_{t,\bm{n} \ge \bm{0}}\ \, t,\quad\text{s.t.}\quad \bm{e}_1^T\Psi_s^\dagger(\bm{n})\bm{e}_1\leq t,\quad \bm{n}^T \bm{h}^s \geq 1,\quad
\bm{n}^T \bm{c}\leq b,\quad s=1,\dots,m.
\end{align}
Its corresponding SDP reformulation reads:
\begin{align}
\label{eq:MLBLUE_MOMOSAP}
\min\limits_{t,\bm{n} \ge \bm{0}}\ \, t,\quad\text{s.t.}\quad \Phi_s(t,\bm{n}) = \left[\begin{array}{cc}
\Psi_s(\bm{n}) & \bm{e}_1\\
\bm{e}_1^T & t
\end{array}\right]\succeq 0,\quad \bm{n}^T \bm{h}^s \geq 1,\quad
\bm{n}^T \bm{c}\leq b,\quad s=1,\dots,m.
\end{align}
We now have $m$ different positive semidefiniteness constraints, one for each output, but otherwise this is similar to the single-output MOSAPs \eqref{eq:MLBLUE_MOSAP_1} and \eqref{eq:MLBLUE_MOSAP_SDP}. Let $\mathcal{G}=\bigcup_{s=1}^m\mathcal{G}^s$ be the set of all possible model groupings across all estimators. Then the vector $\bm{n}$ is of size $|\mathcal{G}|$ and the vectors $\bm{h}^s\in\R^{|\mathcal{G}|}$ are constructed in the same way as before: for all $\mathcal{G}_k\in\mathcal{G}$, set $\bm{h}_k=1$ if $\mathcal{M}_1\in \mathcal{G}_k\in\mathcal{G}^s$ and $\bm{h}_k=0$ otherwise. Following the same approach as in the proof of Theorem \ref{th:SDP_equivalence} it is easy to see that for any pair $(t,\bm{n})$ that is feasible for problem \eqref{eq:MLBLUE_MOMOSAP} we have that $\bm{e}_1^T\Psi_s^\dagger(\bm{n})\bm{e}_1\leq t$ for all $s$, and therefore the two optimization problems \eqref{eq:MLBLUE_MOMOSAP_std} and \eqref{eq:MLBLUE_MOMOSAP} are equivalent.

\begin{remark}
	Minimizing the maximum statistical error across all outputs as in \eqref{eq:MLBLUE_MOMOSAP} is not the only option and other objective functions can be considered. For instance, if we introduce a slack vector $\bm{t} \in \R^m$, with $t_s$ corresponding to the statistical error of output $s$, we can replace $\Phi_s(t,\bm{n})$ in \eqref{eq:MLBLUE_MOMOSAP} with $\Phi_s(\bm{t}_s,\bm{n})$, and then minimize a linear or convex quadratic function\footnote{A convex quadratic objective would add a second-order cone constraint to \eqref{eq:MLBLUE_MOMOSAP}; the resulting problem is still a conic program and is no more difficult to solve than the original SDP \cite{alizadeh2003second}. For example, to minimize $|| \bm{t} ||_2$, we introduce a scalar variable $\bar{t}$ and the constraint $\bar{t} \ge || \bm{t} ||_2$ (which can be expressed as a second-order cone constraint), and define the objective to be $\bar{t}$.} of $\bm{t}$, e.g., $||\bm{t}||_1$, $||\bm{t}||_{\bm{w}}=\sum w_s\bm{t}_s$ for some positive vector of weights $\bm{w}$, or $||\bm{t}||_2$.
\end{remark}

\paragraph{2) Minimizing the total cost for fixed statistical error tolerances.}
An alternative MOSAP formulation can be obtained by setting a separate statistical error tolerance $\eps_s^2>0$ for each output $s$, and then minimizing the total cost of all $m$ estimators. This yields the two equivalent formulations:
\begin{align}
\label{eq:MLBLUE_MOMOSAP_2_std}
\min\limits_{\bm{n}\ge \bm{0}} \, \bm{n}^T \bm{c},\quad\text{s.t.}\quad &\bm{e}_1^T\Psi_s^\dagger(\bm{n})\bm{e}_1\leq \eps^2_s,\qquad\qquad\qquad\qquad\ \  \bm{n}^T \bm{h}^s \geq 1,\quad s=1,\dots,m,\\
\label{eq:MLBLUE_MOMOSAP_2}
\min\limits_{\bm{n}\ge \bm{0}} \, \bm{n}^T \bm{c},\quad\text{s.t.}\quad &\Phi_s(\eps^2_s,\bm{n}) = \left[\begin{array}{cc}
\Psi_s(\bm{n}) & \bm{e}_1\\
\bm{e}_1^T & \eps^2_s
\end{array}\right]\succeq 0,\quad\ \ \  \bm{n}^T \bm{h}^s \geq 1,\quad s=1,\dots,m.
\end{align}
The formulations \eqref{eq:MLBLUE_MOMOSAP_2_std} and \eqref{eq:MLBLUE_MOMOSAP_2} extend \eqref{eq:MLBLUE_MOSAP_2} and \eqref{eq:MLBLUE_MOSAP_SDP_2}, respectively, to the multi-output case.

\paragraph{3) Trace out the Pareto frontier between the two objectives of maximum estimator variance and total cost.}
A third possibility is to introduce a nonnegative parameter $\tau$ and consider the MOSAP
\begin{align}
\label{eq:MLBLUE_MOMOSAP_std_Pareto}
\min\limits_{t,\bm{n} \ge \bm{0}}\ \, t + \tau \bm{n}^T \bm{c} ,\quad\text{s.t.}\quad \bm{e}_1^T\Psi_s^\dagger(\bm{n})\bm{e}_1\leq t,\quad \bm{n}^T \bm{h}^s \geq 1,\quad s=1,\dots,m.
\end{align}
Its corresponding SDP reformulation reads:
\begin{align}
\label{eq:MLBLUE_MOMOSAP_Pareto}
\min\limits_{t,\bm{n} \ge \bm{0}}\ \, t + \tau \bm{n}^T \bm{c},\quad\text{s.t.}\quad \Phi_s(t,\bm{n}) = \left[\begin{array}{cc}
\Psi_s(\bm{n}) & \bm{e}_1\\
\bm{e}_1^T & t
\end{array}\right]\succeq 0,\quad \bm{n}^T \bm{h}^s \geq 1,\quad s=1,\dots,m.
\end{align}
Equation \eqref{eq:MLBLUE_MOMOSAP_Pareto} is the multi-output equivalent of \eqref{eq:MLBLUE_MOSAP_SDP_3}. Its solution will match that of \eqref{eq:MLBLUE_MOMOSAP} for some nonnegative value of $\tau$. For the variant of \eqref{eq:MLBLUE_MOMOSAP_2} in which we define  $\varepsilon_s$ to have the same positive value for all $s=1,2,\dots,m$,  there is also a nonnegative value of $\tau$ for which the solutions of \eqref{eq:MLBLUE_MOMOSAP_Pareto} and \eqref{eq:MLBLUE_MOMOSAP_2} coincide.

\paragraph{}
We highlight the fact that these multi-output MLBLUEs leave the model selection completely to the MOSAP \emph{without need for the user to make any prior decision about the quality of the models or the structure of the model set}. This is a convenient feature which becomes essential when models are heterogeneous and there is no clear distinction as to which ones are more suitable for the joint estimation problem. We also remark that multi-output MLBLUEs are the first instance of a multi-output multilevel or multifidelity estimator that simultaneously constructs a separate estimator for each QoI.

\begin{remark}
	Note that the MLBLUEs in \eqref{eq:MO_MLBLUE_HF} are all BLUE if considered \emph{independently}, but they are not BLUE if the multi-output estimation is considered \emph{jointly}. One could account for the correlations between different outputs and construct a single multi-output MLBLUE in which different outputs are treated as different ``models''. However, this would lead to a total of up to $m\ell$ possible models, one for each (model, output) combination, causing the number of possible groupings to scale like $O(2^{m\ell})$ and considerably increase the cost of solving the MOSAP.
\end{remark}

\begin{remark}
	We remark that single and multi-output MLBLUEs (and all multifidelity methods based on covariances) can be extended to handle infinite-dimensional QoI with minor modifications. Provided that the output belongs to a Bochner space $\mathcal{L}^2(\Omega, \mathcal{V})$ for a suitable sample space $\Omega$ and a Hilbert space $\mathcal{V}$, it is sufficient to define the covariance operator (and consequently variance and correlation operators) as the inner product $(\cdot,\cdot)_{\mathcal{L}^2(\Omega, \mathcal{V})}$ of $\mathcal{L}^2(\Omega, \mathcal{V})$, i.e.,
	\begin{align}
	\label{eq:covariance_hilbert}
	C(X,Y) = (X,Y)_{\mathcal{L}^2(\Omega, \mathcal{V})} = \E[(X-\E[X],\ Y - \E[Y])_{\mathcal{V}}],\quad\forall X,Y\in \mathcal{L}^2(\Omega, \mathcal{V}),
	\end{align}	
	 where $(\cdot,\cdot)_{\mathcal{V}}$ is the inner product in $\mathcal{V}$. The above is a natural generalization: by taking $\mathcal{V}=\R$ we recover the standard covariance operator between scalar random variables. This approach has the advantage of yielding scalar-valued covariances that can be used in multifidelity methods in the same way as in the scalar case.
\end{remark}

\section{Numerical results}
\label{sec:numerical_results}

In this section we test both the new SDP formulations of the MLBLUE MOSAP and the single- and multi-output MLBLUEs. The aim is to demonstrate the advantages of the new MOSAP formulations and of MLBLUEs themselves in problems that reflect the complexity of typical computational engineering applications. Specifically, we will show how the improved efficiency of MLBLUEs results from their ability to work with any selection of model groupings. Here we only compare MLBLUE with MLMC and MFMC: performance comparisons between MLBLUE and other estimators such as the ACV methods for a single QoI are available in \cite{schaden2020multilevel,schaden2021asymptotic} and extending other estimators to multi-output problems is beyond the scope of this work.

We implemented the single- and multi-output MLBLUEs (as well as single- and multi-output MLMC and MFMC) into an open-source Python software, \texttt{BLUEST}, which is publicly available on GitHub at \url{https://github.com/croci/bluest}. \texttt{BLUEST} allows users to easily wrap Python models (and, by extension, any model that can be called from Python), and supports MPI sample parallelization of serial and MPI-parallelized models.

\subsection{Model heterogeneity}
We start by considering two problems with heterogeneous model sets: Problem 1 uses global and local grid refinement to define the models while Problem 2 uses global grid and timestep refinement as well as a combination of different PDE and ODE models. The objective here is to demonstrate the improved efficiency of multi-output MLBLUEs and of the new SDP formulations. We first investigate the computational efficiency of the MLBLUE, MLMC, and MFMC methods constructed for each problem and then describe the computational advantages of the new MOSAP formulations.

\subsubsection{Problem 1: Steady Navier-Stokes flow past two cylinders}
\label{subsubsec:NS}

\begin{figure}[h!]
	\centering
	\includegraphics[width=0.7\textwidth]{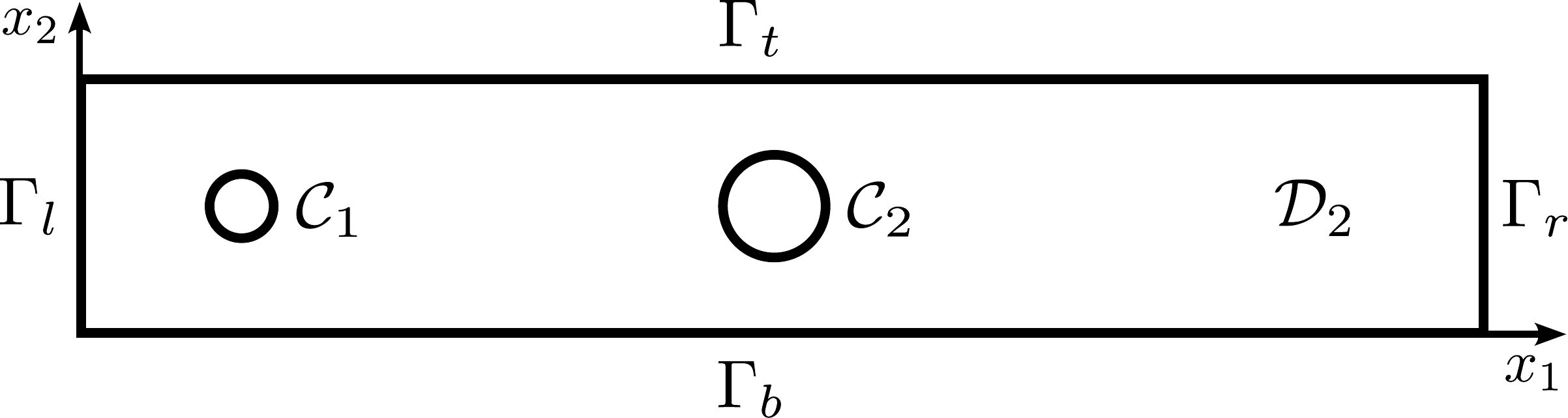}
	\caption{\textit{The domain of Problem 1. The rectangle has length $|\Gamma_b|=|\Gamma_t|=2.2$ and width $|\Gamma_l|=|\Gamma_r|=0.41$. The two obstacles $\mathcal{C}_1$ and $\mathcal{C}_2$ have centers at $(0.2,0.2)$ and $(1.0,0.2)$ and radii $0.05$ and $0.08$ respectively. $\Gamma_l$ and $\Gamma_r$ are the inflow and outflow boundaries respectively and the other boundaries are walls.}}
	\label{fig:domain_NS}
	\vspace{0pt}
\end{figure}

\paragraph{Problem description.} As a first test problem we consider a steady flow past two cylindrical obstacles of different sizes, as described by the steady Navier-Stokes equations:
\begin{align}
\label{eq:problem2}
\begin{dcases}
\begin{array}{ll}
-\nu(\omega)\Delta \bm{u} + \bm{u}\nabla \bm{u} + \nabla q = 0,\quad \nabla\cdot \bm{u} = 0, & \bm{x}\in \mathcal{D}_2,\\
\bm{u}|_{\Gamma_t}=\bm{u}|_{\Gamma_b}=\bm{u}|_{\partial \mathcal{C}_1}=\bm{u}|_{\partial \mathcal{C}_2}= \bm{0}, & \nu(\omega)\nabla\bm{u}|_{\Gamma_r}\bm{\eta}-q|_{\Gamma_r}\bm{\eta} = 0,\\
\bm{u}|_{\Gamma_l} = \left({4|\Gamma_l|^{-2}u_{\text{in}}(\omega)x_2(|\Gamma_l|-x_2)},\ 0\right)^T, & \bm{x}=(x_1,x_2)^T,
\end{array}
\end{dcases}
\end{align}
where the domain $\mathcal{D}_2$ is described Figure~\ref{fig:domain_NS}. Here $\bm{u}$ is the flow velocity and $q$ its pressure, and $\bm{\eta}$ denotes the outer unit normal vector of the boundary. The viscosity $\nu$ and the maximum inlet velocity $u_{\text{in}}$ are modeled as uniform random variables, with $\nu(\omega)\in[5\times 10^{-4},\ 1.5\times 10^{-3}]$ and $u_{\text{in}}\in[0.1,\ 0.5]$. This leads to a Reynolds number which is also random (albeit not uniform), and given by
\begin{align*}
\text{Re}(\omega)=\frac{u_{\text{mean}}(\omega)}{\nu(\omega)}\frac{|\mathcal{C}_2|}{\pi},\quad \text{Re}(\omega)\in [7.\bar{1},\ 106.\bar{6}].
\end{align*}
where $u_{\text{mean}}=\frac{2}{3}u_{\text{in}}$ is the mean inlet velocity and $|\mathcal{C}_2|/\pi$ is the diameter of the largest obstacle.

\paragraph{Quantities of interest.}
We consider six QoIs: the drag and lift coefficients around each obstacle
\begin{align}
(c_{\text{drag}}^i,c_{\text{lift}}^i)^T=\frac{2\pi}{u_{\text{mean}}^2|\mathcal{C}_i|}\int_{\partial \mathcal{C}_i}(\nu\nabla\bm{u}-pI)\bm{\eta}\text{ d}s,\quad i=1,2,
\end{align}
as well as the difference in pressure before and after each cylinder
\begin{align}
\Delta_{\mathcal{C}_1} q = q(0.15,0.2)-q(0.25,0.2),\quad \Delta_{\mathcal{C}_2} q = q(0.72,0.2)-q(0.88,0.2).
\end{align}

\paragraph{Model set.} We construct twelve grid-based models for this problem as follows. First, we construct three base meshes, one coarse, one medium, and one fine. Then we locally refine each base mesh in three different ways: around both obstacles, around $\mathcal{C}_1$ only, or around $\mathcal{C}_2$ only. We then have three base grids with four configurations each (three types of local refinements $+$ no refinement), for a total of twelve grids, which we show in Figure \ref{fig:NS_grids}. We build the meshes with the open-source software \texttt{Gmsh}\footnote{Available at \url{https://gmsh.info/}.} (version $4.10.5$) and the mesh-generating scripts are available in the \texttt{BLUEST} open-source repository\footnote{See \texttt{mesh\_generator.py} in the \texttt{bluest/examples/navier-stokes} folder in the \texttt{BLUEST} repository.}. Note that the grids refined around a single obstacle improve the approximation of the QoIs defined on that obstacle, but not of those defined on the other cylinder. We design the model set in this way to investigate the performance of multi-output MLBLUEs in a setting in which some low-fidelity models yield good approximations of only some QoIs. While Problem~1 is a simple example, a conceptually similar setting often arises in multi-physics problems where low-fidelity models are defined by simplifying some of the physics. In such problems, the low-fidelity models will yield poor approximations of the QoIs related to the physics that were simplified.

\begin{figure}[h!]
	\centering
	\includegraphics[width=\textwidth]{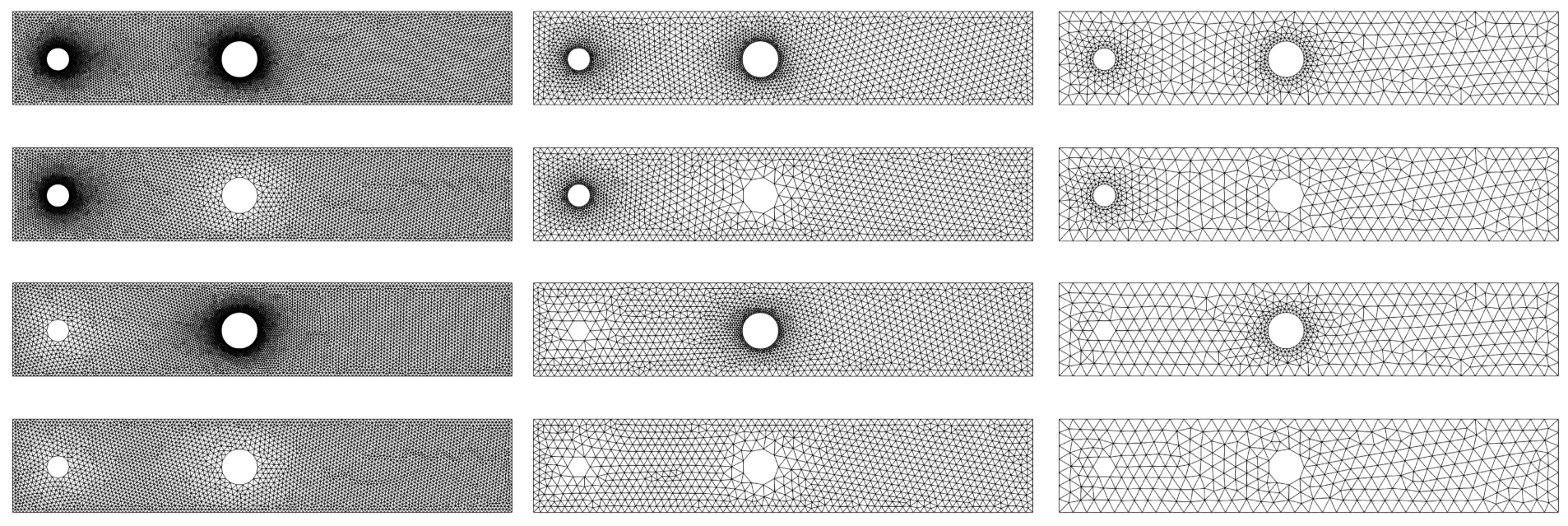}
	\caption{\textit{The grids used to define the twelve models for Problem 1. Each column corresponds to a different ``base'' mesh: fine (left), medium (center), coarse (right). The actual ``base'' meshes are in the last row.}}
	\label{fig:NS_grids}
	\vspace{0pt}
\end{figure}

\paragraph{Covariance estimation.} We take $100$ pilot samples of all models to estimate model covariances. This is a larger amount than needed in practice in most cases \cite{NgWillcox2014multifidelity}; we choose a high number to sanitize our results from pilot sampling errors.

\paragraph{Solver and model costs.} We solve \eqref{eq:problem2} over all grids with the FE method using the Taylor-Hood FE pair and the \texttt{FEniCS} library \cite{LoggEtAl2012} using a direct linear solver. As to costs, we use the number of degrees of freedom as a proxy for simplicity: we set these pseudo-costs to be $(n_{\text{dofs}}^i)^2$, where $n_{\text{dofs}}^i$ is the number of degrees of freedom of model $i$, and we have assumed a quadratic cost complexity for the nonlinear solver used to solve \eqref{eq:problem2}. Going from top to bottom and from left to right in Figure \ref{fig:NS_grids}, the number of degrees of freedom for each grid are $\{68619, 47952, 53817, 33114\}$ (fine grids), $\{18512, 12875, 14471, 8924\}$ (medium grids) $\{5225, 3932, 4274, 3026\}$ (coarse grids). We do not account for pilot samples in our cost computations since all the methods we compare (MLBLUE, MLMC and MFMC) benefit from more accurate estimates.

\paragraph{Estimator setup.}
We solve the statistical-error-constrained SDP MOSAP \eqref{eq:MLBLUE_MOMOSAP_2} with tolerances $\varepsilon_s=10^{-3}\sqrt{\V[p^s_1]}$ for $s=1,\dots m=6$, where $\V[p^s_1]$ is estimated from the pilot samples. Here we consider all model groupings of size at most $\kappa=7$ (cf.~Remark \ref{rem:max_group_size}). For constructing the MLMC and MFMC estimators, we proceed as follows. For each possible model combination that includes the high-fidelity model, we loop over all QoIs $s=1,\dots, m$. For each QoI, we order the models (by cost for MLMC \cite{giles2015multilevel} and by correlation for MFMC \cite{NgWillcox2014multifidelity}) and compute the QoI optimal sample allocation satisfying the tolerance $\varepsilon_s$ by using the MLMC/MFMC single-output analytic expression. We then take the maximum number of samples across all QoIs and use the result to compute the corresponding total multi-output estimator cost for that model combination. Finally, we select the model group corresponding to the minimum total cost. Note that by setting up MLMC and MFMC this way there is no need to provide any model ordering \emph{a priori}. For MFMC, if for any model combination there is a QoI for which the MFMC model ordering constraints \cite{NgWillcox2014multifidelity} are not satisfied, then that model combination is discarded. (Note that MLMC does not have an equivalent restriction on the variances\footnote{MLMC variances are often assumed as decaying in MLMC convergence theory. However, this is not a requirement of MLMC itself \cite{giles2015multilevel}.} \cite{giles2015multilevel}.) In all methods, once a real-valued sample allocation $\bm{n}^*$ is found, we project it to an integer-valued solution (cf.~Remark \ref{rem:integer_proj}).

\begin{figure}[h!]
	\centering
	\includegraphics[width=\textwidth]{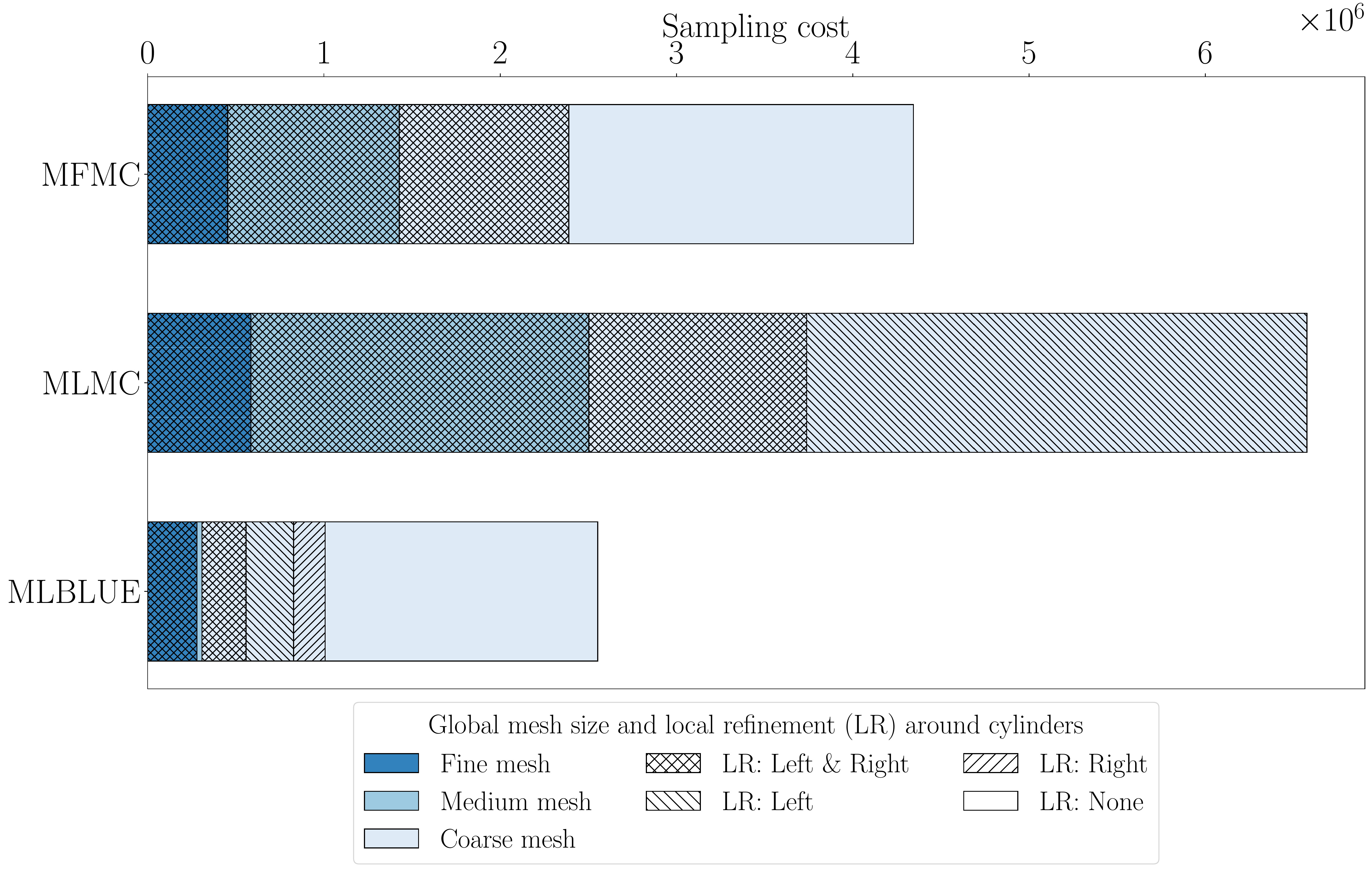}
	\caption{\textit{Total cost of the MOSAP-optimal MLBLUE, MLMC, and MFMC methods for Problem 1. Colors denote the ``base'' meshes (darker $=$ finer), while the patterns indicate the local refinements around obstacles.}}
	\label{fig:NS_cost_complexity}
	\vspace{0pt}
\end{figure}

\begin{figure}[h!]
	\centering
	\includegraphics[width=0.8\textwidth]{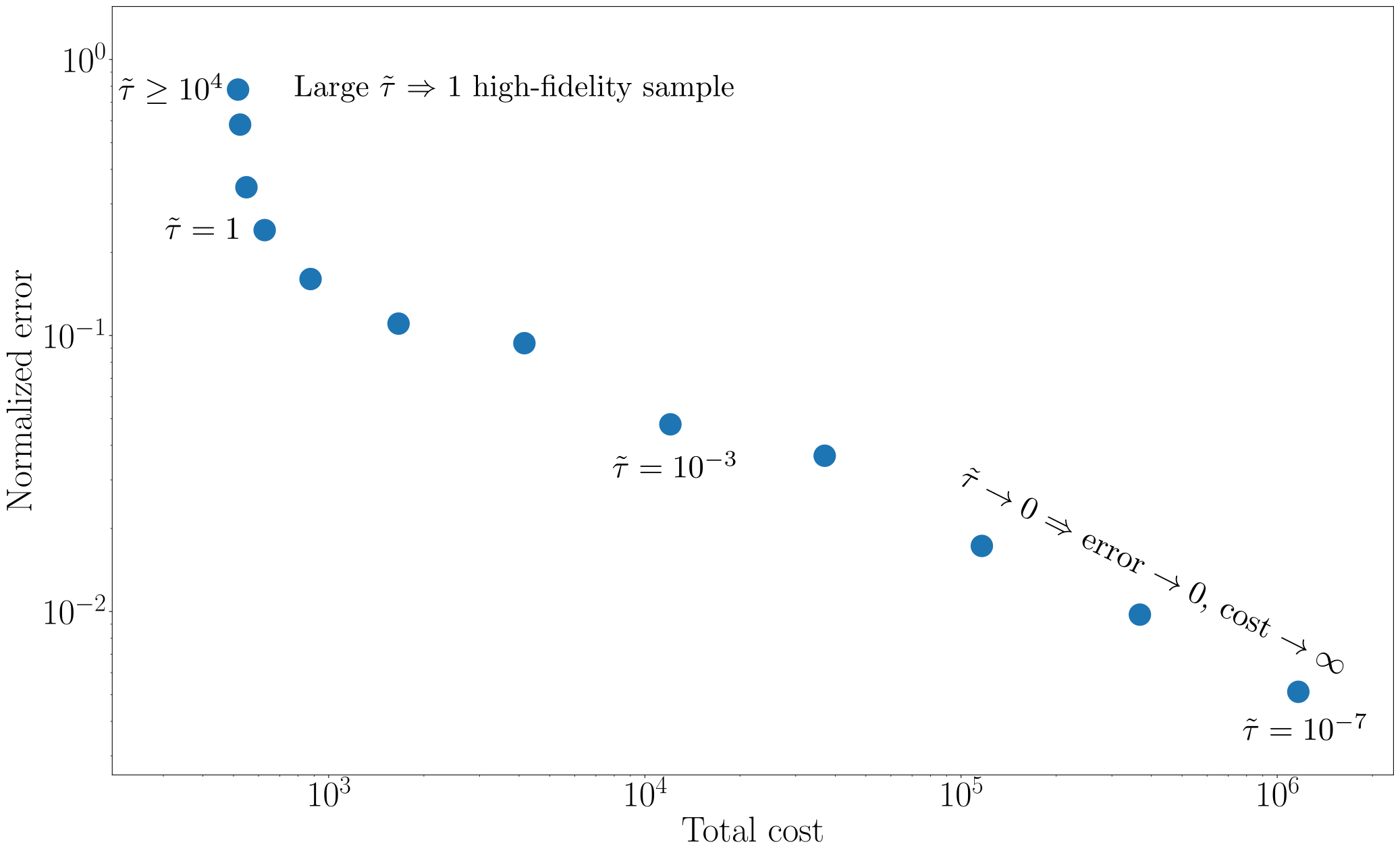}
	\caption{\textit{Points on the Pareto frontier MLBLUE for Problem 1. Here $\tilde{\tau}=\tau\lVert \bm{c} \rVert_2$, where $\tau$ is the Pareto parameter from \eqref{eq:MLBLUE_MOMOSAP_Pareto} and $\bm{c}$ is the model cost vector. From right to left, the points in the figure correspond to $\tilde{\tau}=10^i$ for $i=-7,\dots,4$.}}
	\label{fig:pareto_front_NS}
	\vspace{0pt}
\end{figure}

\paragraph{Results: total estimation cost.}
Once the optimal MLBLUE, MLMC and MFMC estimators have been constructed, we compute their total cost so that we can compare their efficiency. Results are shown in Figure \ref{fig:NS_cost_complexity}. MLBLUE is roughly three times more efficient than MLMC and two times more efficient than MFMC for the same statistical error tolerances.
The higher efficiency of MLBLUE derives from a better management and exploitation of the cheaper local refinement models: MLBLUE exploits all coarse grid models and only takes a few samples of the expensive medium grid models. In contrast, MLMC and MFMC draw many medium grid model samples, which account for a considerable portion of their total cost.

\paragraph{Results: Pareto frontier optimization.}
We solve the Pareto formulation of the MLBLUE MOSAP \eqref{eq:MLBLUE_MOMOSAP_Pareto} to analyze the trade-off between the two constraints of budget and accuracy. Figure \ref{fig:pareto_front_NS} shows the Pareto-optimal points corresponding to different values of the Pareto parameter $\tau$. We take $\tau=\tilde{\tau}\lVert \bm{c} \rVert_2^{-1}$, where $\tilde{\tau}=10^i$ for $i=-7,\dots,4$. The points are plotted in the total MLBLUE cost vs normalized error plane, where
\begin{align}
\text{normalized error} = \max_{s}\left(\frac{\V[\bm{e}^T_1\hat{\bm{\mu}}^s]}{\V[p^s_1]}\right)^{1/2}.
\end{align}
For $s=1,\dots,m$, the high-fidelity model output variances $\V[p^s_1]$ are estimated from pilot samples and $\V[\bm{e}^T_1\hat{\bm{\mu}}^s]$ is the estimator variance computed from the MOSAP solution using \eqref{eq:MO_MLBLUE_HF}. For large enough $\tau$, accuracy becomes unimportant and the MOSAP yields the cheapest, least accurate estimator satisfying the constraints in \eqref{eq:MLBLUE_MOMOSAP_Pareto}, namely, a single Monte Carlo sample of the high-fidelity model. On the other hand, as $\tau\rightarrow 0$, the Pareto frontier becomes a line in the loglog plot with a slope of $-\frac{1}{2}$ as expected for a Monte Carlo simulation: error $=$ cost$^{-1/2}$. For the values of $\tau$ between these two extremes, the behavior of the frontier is less obvious, and in this low-to-moderate-budget regime, it provides precise quantative information about the available tradeoffs. Here we have aggregated the MLBLUE error across all QoIs for simplicity. For more precise calibration, it may be beneficial to compute separate Pareto frontiers for each output. 

\subsubsection{Problem 2: Hodgkin-Huxley model}

\paragraph{Problem description - Hodgkin-Huxley high-fidelity model.} As a second test problem we consider the Hodgkin-Huxley equations for neuron membrane action potential in 1D on the unit interval:
\begin{align}
\label{eq:HH}
\begin{dcases}
c(\omega)v_t &= \iota(\omega) + \epsilon(\omega)\Delta v + g_{\text{K}}\alpha^4(v_{\text{K}}-v) + g_{\text{Na}}\beta^3 \gamma(v_{\text{Na}}-v) + g_l(v_l - v),\quad t\in[0,20].\\
\alpha_t &= \phi_\alpha(v)(1-\alpha)-\psi_\alpha(v)\alpha,\\
\beta_t &= \phi_\beta(v)(1-\beta)-\psi_\beta(v)\beta,\\
\gamma_t &= \phi_\gamma(v)(1-\gamma)-\psi_\gamma(v)\gamma,
\end{dcases}
\end{align}
\vspace{-12pt}
\begin{align}
\begin{array}{ll}
\phi_\alpha(v) = \dfrac{1}{10}\dfrac{1-v/10}{e^{1-v/10}-1}, & \psi_\alpha(v) = \dfrac{e^{-v/80}}{8},\\[9pt]
\phi_\beta(v) = \dfrac{2.5-v/10}{e^{2.5-v/10}-1}, & \psi_\beta(v) = 4e^{-v/18},\\[9pt]
\phi_\gamma(v) = 0.07e^{-v/20}, & \psi_\gamma(v) = (1+e^{3-v/10})^{-1}.
\end{array}
\end{align}
The boundary conditions are $v_x(t,0,\omega) = v(t,1,\omega)=0$ for all $\omega,t$ (homogeneous Neumann and Dirichlet on the left and right boundary respectively), and the initial conditions are given by
\begin{align}
\label{eq:HH_BCs}
v(0,x,\omega)=v_{\text{eq}},\quad w(0,x,\omega)=\frac{\phi_w(v-v_{\text{eq}})}{\phi_w(v-v_{\text{eq}}) + \psi_w(v-v_{\text{eq}})},\quad\text{for}\quad w\in\{\alpha,\beta,\gamma\},
\end{align}
where $v_{\text{eq}}$ is the equilibrium potential, whose value is given in Table \ref{tab:HH_params} together with the values of all the deterministic parameters used for this problem. We refer the reader to \cite{chapman2021introduction} for a derivation and description of the original Hodgkin-Huxley model in ordinary differential equation (ODE) form and its PDE extension, which was studied in \cite{lieberstein1967hodgkin}.

\begin{table}[h!]
	\centering
	\begin{tabular}{@{}cll@{}}
		\toprule
		\multicolumn{1}{c}{\textbf{Parameter}}       & \multicolumn{1}{c}{\textbf{Meaning}}                      & \multicolumn{1}{c}{\textbf{Value}}             \\ \midrule
		$g_{\text{Na}}$ & Sodium conductance p.u.a.       & $120$ mScm$^{-2}$  \\
		$g_{\text{K}}$  & Potassium conductance p.u.a.    & $36$ mScm$^{-2}$   \\
		$g_l$           & Leakage conductance p.u.a.      & $0.3$ mScm$^{-2}$  \\
		$v_{\text{Na}}$ & Sodium Nernst potential         & $56$ mV            \\
		$v_{\text{K}}$  & Potassium Nernst potential      & $-77$ mV           \\
		$v_l$           & Leakage Nernst potential        & $-60$ mV           \\
		$v_{\text{eq}}$ & Equilibrium potential           & $-67.38614$ mV           \\
		$c_0$           & Membrane capacitance p.u.a.     & $1\ \mu$Fcm$^{-2}$  \\
		$\epsilon_0$    & Neuron fiber conductance        & $0.33616$ mS       \\
		$\iota_0$           & Applied current p.u.a.          & $33.6$ mAcm$^{-2}$ \\ \bottomrule
	\end{tabular}
	\caption{\textit{Parameters used in the Hodgkin-Huxley model \eqref{eq:HH} and their values. The units S, V, F, and A indicate siemens, volts, farads and amperes respectively. The acronym ``p.u.a.'' stands for ``per unit area''.}}
	\label{tab:HH_params}
\end{table}

In \eqref{eq:HH}, $v=v(t,x,\omega)$ is the action potential across a neuron membrane, which depends on the difference in the concentration of sodium (Na) and potassium (K) ions inside and outside of the neuron cell. These differences in concentrations are maintained by active transport mechanisms such as sodium-potassium pumps. In \eqref{eq:HH}, these mechanisms are modeled by the variables $\alpha=\alpha(t,x,\omega)$, $\beta=\beta(t,x,\omega)$, $\gamma=\gamma(t,x,\omega)$, with $0\leq \alpha,\beta,\gamma \leq 1$, which represent the activation and inactivation of gated ion transport channels. The variable $\alpha$ is the potassium gated channel activation, and the variables $\beta$ and $\gamma$ are the sodium gated channel activation and inactivation respectively. Additionally, we model the membrane capacitance $c(\omega)$, the neuron fiber conductance $\epsilon(\omega)$ and the applied current per unit area $\iota(\omega)$ as random variables defined by:
\begin{align}
c(\omega) = (c_0 + 0.2z_1^2(\omega))\ \mu\text{Fcm}^{-2},\quad \epsilon(\omega)=e^{z_2}\ \text{mS},\quad \iota(\omega) = \iota_0 \times 5^{2\upsilon(\omega)-1}\ \text{mAcm}^{-2},
\end{align}
where $z_1(\omega)$ and $\upsilon(\omega)$ are standard Gaussian and uniform random variables respectively, and $z_2(\omega)$ is a Gaussian random variable with mean and variance chosen so that $\E[e^{z_2}]=\epsilon_0$ and $\V[e^{z_2}]^{1/2}=0.2\epsilon_0$.

\paragraph{Problem description - FitzHugh-Nagumo low-fidelity model.} As a low-fidelity model, we take the FitzHugh-Nagumo model, which is obtained by introducing the simplifying assumptions that $\beta$ and $\alpha+\gamma$ are constant in time and space with $\beta=\bar{\beta}$ and $\alpha+\gamma = \bar{\gamma}$. The values of $\bar{\beta}$ and $\bar{\gamma}$ are obtained from the initial conditions \eqref{eq:HH_BCs}. The resulting FitzHugh-Nagumo equations read
\begin{align}
\label{eq:FN}
\begin{dcases}
c(\omega)v_t &= \iota(\omega) + \epsilon(\omega)\Delta v + g_{\text{K}}\alpha^4(v_{\text{K}}-v) + g_{\text{Na}}\bar{\beta}^3 (\bar{\gamma}-\alpha)(v_{\text{Na}}-v) + g_l(v_l - v),\\
\alpha_t &= \phi_\alpha(v)(1-\alpha)-\psi_\alpha(v)\alpha,
\end{dcases}
\end{align}
where the initial and boundary conditions for $v$ and $\alpha$ are the same as for the Hodgkin-Huxley model. We refer the reader to \cite{chapman2021introduction} for further information about this model.

\paragraph{Quantities of interest.} As QoIs, we choose the peak membrane potential $v_{\text{peak}}(\omega)=\max_{t,x}v(t,x,\omega)$, and the net total membrane, potassium, sodium, and leakage currents, which are respectively given by:
\begin{align}
\iota_{\text{me}}(\omega)&=\int_{0}^{20}\int_0^1\left(\iota(\omega) + \epsilon(\omega)\Delta v\right)\text{ d}x\text{ dt},\quad\iota_{\text{Na}}(\omega)=\int_{0}^{20}\int_0^1g_{\text{Na}}\beta^3 d(v_{\text{Na}}-v)\text{ d}x\text{ dt},\\
\iota_{\text{K}}(\omega)&=\int_{0}^{20}\int_0^1g_{\text{K}}\alpha^4(v_{\text{K}}-v)\text{ d}x\text{ dt},\quad\quad\quad\iota_{l}(\omega)=\int_{0}^{20}\int_0^1g_l(v_l-v)\text{ d}x\text{ dt}.
\end{align}

\paragraph{Model set.} We consider four base models: the Hodgkin-Huxley PDE model \eqref{eq:HH}, the FitzHugh-Nagumo PDE model \eqref{eq:FN}, and the corresponding ODE models obtained by setting $\epsilon=0$ and taking $v$, $\alpha$, $\beta$, and $\gamma$ to be constant in space. From each of these models, we consider three different types of grid and timestep refinements: we discretize the equations in time and space (PDE models only) using uniform grids with mesh size and timestep respectively given by $\Delta x=2^{-(2i+1)}$ and $\Delta t=\frac{2}{5}\Delta x$ for $i=1,2,3$. This yields a model set comprised of twelve models in total, in which the Hodgkin-Huxley PDE model with the finest grid and timestep is the high-fidelity model. This model set is an example of a heterogeneous model set in which again not all models are good for all outputs. For instance, the ODE models are unlikely to accurately approximate $\iota_{\text{me}}$ (the diffusion term is zero for the ODEs), and the FitzHugh-Nagumo models are unlikely to yield accurate samples of $\iota_{\text{Na}}$ (it involves quantities approximated as constants).

\paragraph{Covariance estimation.} As for Problem 1, we take $100$ pilot samples of all models to estimate model correlations.

\paragraph{Solver and model costs.} We discretize the PDEs in space with the FE method using piecewise linear elements in \texttt{FEniCS} \cite{LoggEtAl2012} and both ODEs and PDEs in time using the backward Euler method.  We set the model costs to be given by $\tilde{c}_i n_{\text{ts}}^i n_{\text{dofs}}^i$, where $n_{\text{ts}}^i$ and $n_{\text{dofs}}^i$ are the number of timesteps and of degrees of freedom used by model $i$ respectively ($n_{\text{dofs}}^i$ is the number of equations in the ODE models). Here the $\tilde{c}_i$ factor is $1$ for the ODE models and $8$ for the PDE models, and accounts for the fact that PDEs have a slightly higher cost per degree-of-freedom than ODEs. The values of $\tilde{c}_i$ were estimated from CPU times.

\begin{figure}[h!]
	\centering
	\includegraphics[width=\textwidth]{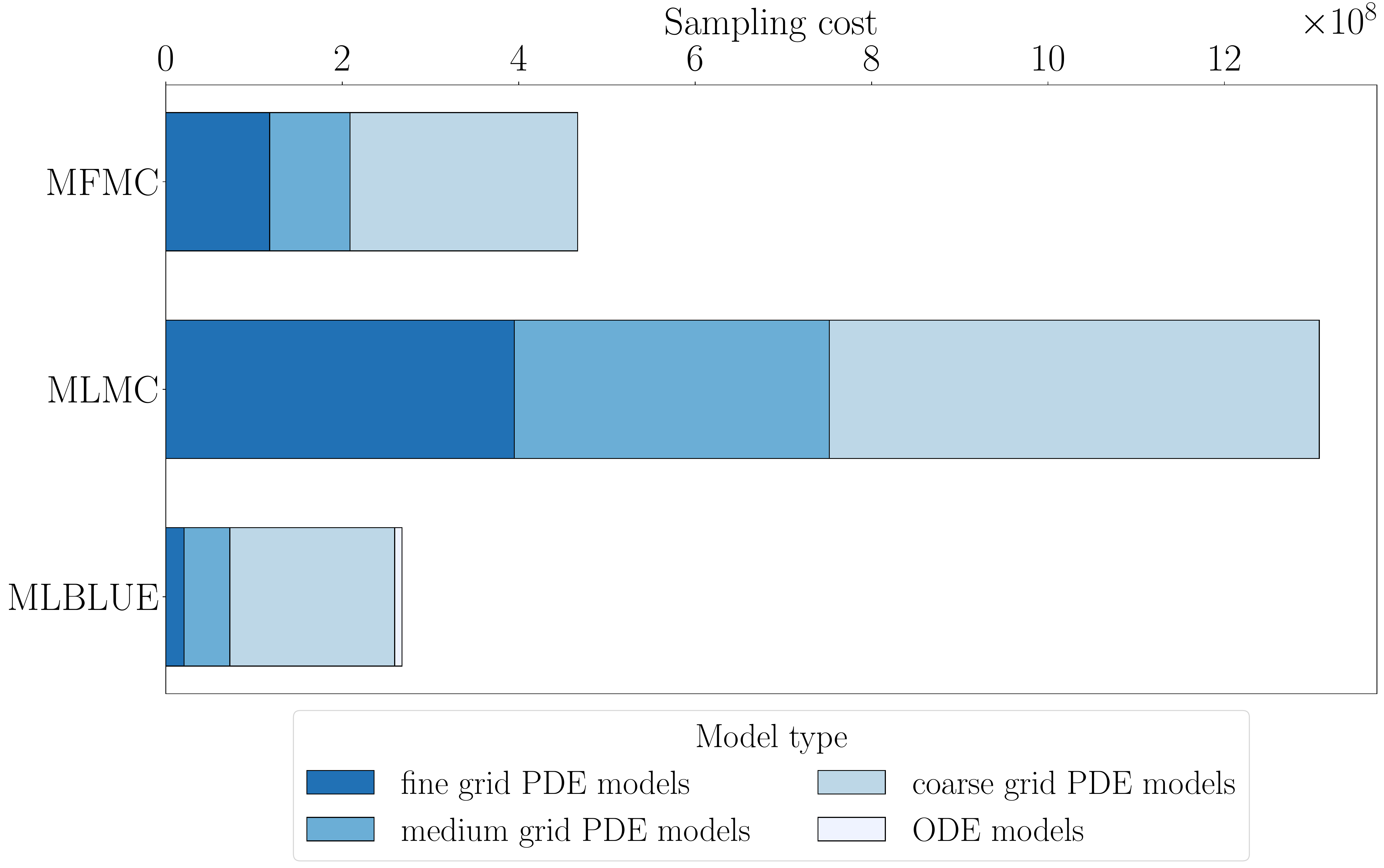}
	\caption{\textit{Total cost of the MOSAP-optimal MLBLUE, MLMC, and MFMC methods for Problem~2. The three darkest colors correspond to the PDE models with darker tints indicating finer grids and timesteps. The lightest color corresponds to the total computational effort spent on all the ODE models. Here we do not distinguish the use of the Hodgkin-Huxley versus the FitzHugh-Nagumo models for the sake of clarity, but we present the detailed sample data in Table \ref{tab:HH_number_of_samples}.}}
	\label{fig:HH_cost_complexity}
	\vspace{-12pt}
\end{figure}

\begin{table}[]
	\centering
	\resizebox{\textwidth}{!}{%
		\begin{tabular}{@{}l|rrr|c|ccc|ccc@{}}
			\toprule
			& \multicolumn{3}{c|}{H-H PDE}   & F-N PDE & \multicolumn{3}{c|}{H-H ODE} & \multicolumn{3}{c}{F-N ODE}   \\
			& fine    & med.     & coarse    & coarse  & fine   & med.    & coarse    & fine    & med.    & coarse    \\ \midrule
			MLMC   & $11959$ & $169150$ & $3856682$ & $0$     & $0$    & $0$     & $0$       & $0$     & $0$     & $0$       \\
			MFMC   & $3561$  & $43181$  & $1792145$ & $0$     & $0$    & $0$     & $0$       & $0$     & $0$     & $0$       \\
			MLBLUE & $626$   & $24527$  & $1249805$ & $95652$ & $938$  & $6441$  & $259634$  & $69746$ & $69121$ & $6356360$ \\ \bottomrule
		\end{tabular}%
	}
	\caption{\textit{Number of samples taken from each model by the MOSAP-optimal MLBLUE, MLMC and MFMC methods for Problem~2. ``H-H'' is short for ``Hodgkin-Huxley'' and ``F-N'' for ``FitzHugh-Nagumo'', while ``coarse'', ``med.'', and ``fine'' correspond to the grid and timestep refinement levels $i=1,2,3$ respectively. MLMC and MFMC only use the Hodgkin-Huxley PDE models, while the fine- and medium-grid FitzHugh-Nagumo PDE models are discarded by all methods (hence not shown).}}
	\label{tab:HH_number_of_samples}
\end{table}

\begin{figure}[h!]
	\centering
	\includegraphics[width=0.8\textwidth]{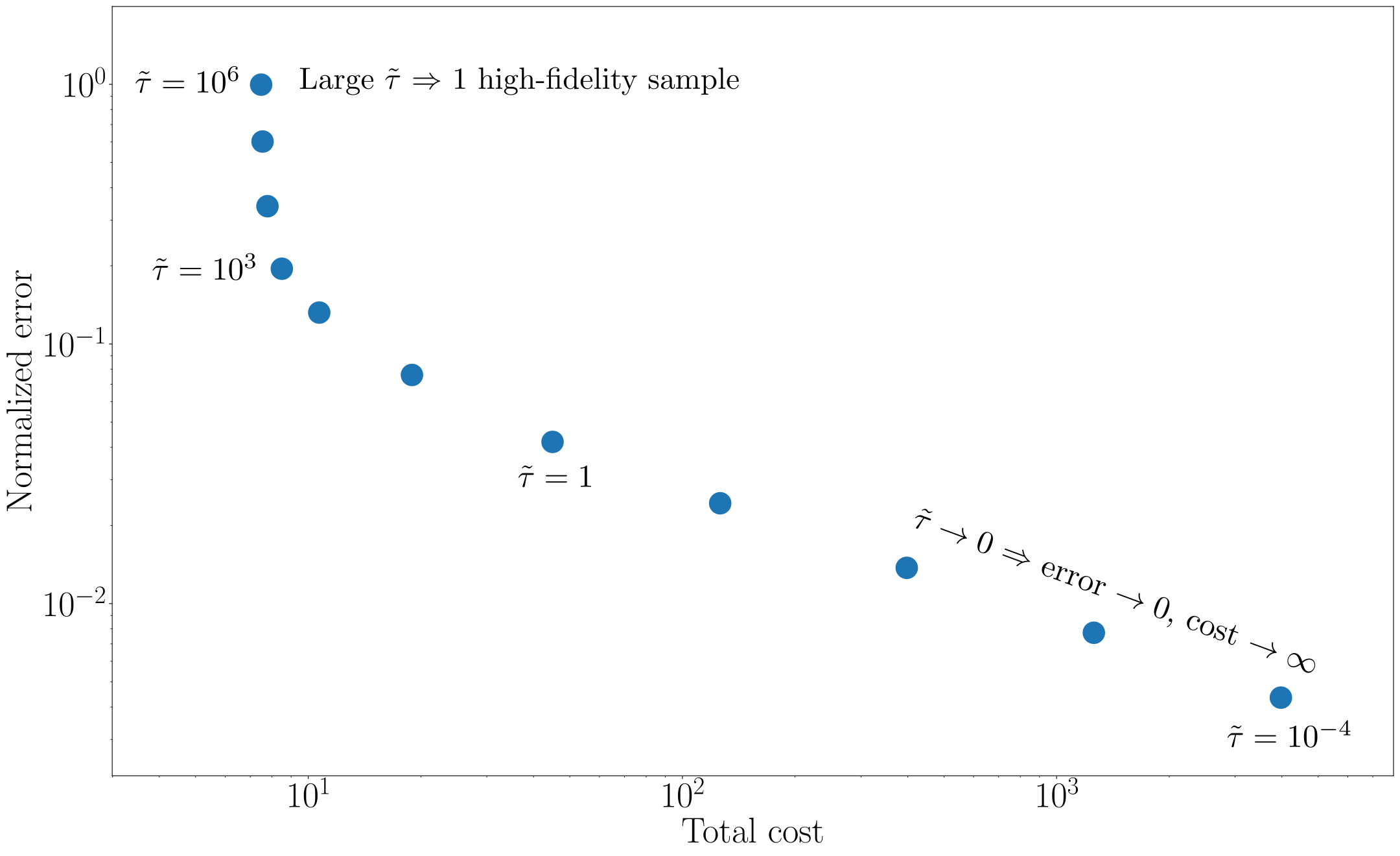}
	\caption{\textit{Pareto-front optimization of MLBLUE for Problem 2. Here $\tilde{\tau}=\tau\lVert \bm{c} \rVert_2$, where $\tau$ is the Pareto parameter from \eqref{eq:MLBLUE_MOMOSAP_Pareto} and $\bm{c}$ is the model cost vector. From right to left, the points in the figure correspond to $\tilde{\tau}=10^i$ for $i=-4,\dots,6$.}}
	\label{fig:pareto_front_HH}
	\vspace{-12pt}
\end{figure}

\paragraph{Estimator setup.} We use the same setup as for Problem 1. Again, we set the statistical error tolerances for the SDP MOSAP to be $\varepsilon_s=10^{-3}\sqrt{\V[p^s_1]}$ for $s=1,\dots m=4$ where $\V[p^s_1]$ is estimated from pilot samples.

\paragraph{Results: total estimation cost and number of samples.} Results are shown in Figure \ref{fig:HH_cost_complexity}, where we show the total cost of each method, and in Table \ref{tab:HH_number_of_samples}, where we report the number of samples taken for each model. For Problem 2 MLBLUE is roughly five times faster than MLMC and two times faster than MFMC. Again, MLBLUE employs a different model selection strategy than the other methods. In fact, MLBLUE is the only method that uses the FitzHugh-Nagumo models and the ODE models, which are much cheaper. These models yield poor approximations to some of the QoIs, causing the model selection procedure (cf.~Subsection \ref{subsubsec:NS}) of MLMC and MFMC to discard them. In contrast, multi-output MLBLUE includes these models in the estimation and exploits their correlations, thus considerably reducing the total number of high-fidelity samples.

\begin{remark}
	While not required for the examples considered in this section, the model selection procedure must account for models that do not admit some of the QoIs as output. For multi-output MLBLUE, this situation is handled automatically by constructing the $\Psi_s$ matrices as described at the beginning of Section \ref{sec:multi-output}. Nevertheless, manual model pruning could still be beneficial to restrict the number of possible model groupings, cf.~Remark \ref{rem:max_group_size}. For MLMC and MFMC, a simple (though suboptimal) option is to simply discard such models from the estimator. Another option would be to combine multiple single-output MLMC/MFMC estimators within a single joint optimization procedure. We leave the investigation of such a strategy to future work.
\end{remark}

\paragraph{Results: Pareto frontier}
We employ the MOSAP \eqref{eq:MLBLUE_MOMOSAP_Pareto} to trace the Pareto frontier of MLBLUE for Problem 2. Results are shown in Figure \ref{fig:pareto_front_HH} where we plot the Pareto points  corresponding to $\tau=\tilde{\tau}\lVert \bm{c} \rVert_2^{-1}$, where $\tilde{\tau}=10^i$ for $i=-4,\dots,6$. Other than the chosen values of $\tilde{\tau}$, the quantities in the figure are computed in the same way as for Figure \ref{fig:pareto_front_NS}. The behavior is similar to what we observed for Problem 1 with the exception that the transition to the asymptotic regime appears to be smoother for this problem.

\subsubsection{Numerical optimization of the MOSAPs}
The objective of this subsection is to benchmark the efficiency and robustness of the new SDP MOSAP formulations against the standard formulations. We consider Problems 1 and 2, and all possible model groupings of size at most $\kappa$ with $\kappa\in\{3,5,7\}$ (cf.~Remark \ref{rem:max_group_size}).

We solve the SDPs \eqref{eq:MLBLUE_MOSAP_SDP}, \eqref{eq:MLBLUE_MOSAP_SDP_2}, \eqref{eq:MLBLUE_MOMOSAP} and \eqref{eq:MLBLUE_MOMOSAP_2} using the \texttt{conelp} solver in the \texttt{CVXOPT} open-source conic optimization software package\footnote{Available at \url{https://cvxopt.org/}.} \cite{cvxopt}. For the standard MOSAP formulations \eqref{eq:MLBLUE_MOSAP_1}, \eqref{eq:MLBLUE_MOSAP_2}, \eqref{eq:MLBLUE_MOMOSAP_std}, and \eqref{eq:MLBLUE_MOMOSAP_2_std}, we use the \texttt{trust-constr} trust-region constrained optimization solver from \texttt{SciPy}\footnote{Available at \url{https://scipy.org/}.} and the open-source black-box interior-point optimization solver  \texttt{IPOPT}\footnote{Available at \url{https://coin-or.github.io/Ipopt/}.}  \cite{wachter2006implementation}. We set an absolute error tolerance of $10^{-6}$ for all optimization solvers. For the budget-constrained MOSAPs we set the budget $b=10^4\max(\bm{c})$, where $\bm{c}$ is the cost vector for all problems. For the statistical-error-constrained MOSAPs we select $\varepsilon_s=10^{-3}\sqrt{\V[p^s_1]}$, for $s=1,\dots,m$, where $m$ is the number of QoIs in the problem and $\sqrt{\V[p^s_1]}$ is the estimated standard deviation of the $s$-th QoI of the high-fidelity model.

We report that the \texttt{trust-constr} method fails to converge to the optimal solution in $5000$ iterations for both problems and all values of $\kappa$. \texttt{IPOPT} does converge, but often fails unless its solver parameters are set accordingly. In particular, we had to set \texttt{bound\_relax\_factor} to $10^{-30}$ and \texttt{honor\_original\_bounds} to \texttt{yes}. The \texttt{CVXOPT} SDP solver always converges in less than $100$ iterations. The only problem for which \texttt{CVXOPT} requires a change in the default solver parameters is the budget-constrained MOSAP formulation of Problem 1 for which setting the feasibility tolerance \texttt{feastol} to $10^{-3}$ is needed to ensure convergence.

We show the \texttt{IPOPT} and \texttt{CVXOPT} SDP solver timings in Table \ref{tab:optimization_results}. The new SDP formulations can robustly be solved in a matter of seconds or minutes depending on the problem size and are much faster to solve than the standard formulations.

\begin{table}[h!]
	\centering
	\begin{tabular}{@{}cccccccc@{}}
		\toprule
		\multirow{2}{*}{Problem}                                                         & \multirow{2}{*}{MOSAP}   & \multicolumn{3}{c}{Minimize statistical error}      & \multicolumn{3}{c}{Minimize total cost} \\
		&                          & $\kappa=3$      & $\kappa=5$      & $\kappa=7$                          & $\kappa=3$      & $\kappa=5$      & $\kappa=7$         \\ \midrule
		\multicolumn{1}{l|}{\multirow{2}{*}{Problem $1$ ($12$ models, $6$ QoIs)}}                                   & \multicolumn{1}{c|}{SDP} & $1.67$   & $66$    & \multicolumn{1}{c|}{$540$}   & $1.2$      & $37$       & $340$      \\
		\multicolumn{1}{c|}{}
		& \multicolumn{1}{c|}{std} & $1.96$        & $215$        & \multicolumn{1}{c|}{$2500$}        & $3.04$           & $627$          & $5000$           \\ \midrule
		\multicolumn{1}{l|}{\multirow{2}{*}{Problem $2$ ($12$ models, $4$ QoIs)}}                                   & \multicolumn{1}{c|}{SDP} & $1.42$   & $56$    & \multicolumn{1}{c|}{$518$}   & $1.14$      & $44$       & $292$      \\
		\multicolumn{1}{c|}{}
		& \multicolumn{1}{c|}{std} & $1.54$        & $163$        & \multicolumn{1}{c|}{$2580$}        & $1.85$           & $314$           & $8983$           \\ \bottomrule
	\end{tabular}
	\caption{\textit{CPU time (in seconds) needed to solve the MLBLUE MOSAPs to optimality on a Dell XPS 13 laptop (processor: Intel \textnormal{i7-1185G7} $3$\textnormal{GHz}, RAM: $32$\textnormal{GB}). The SDP MOSAPs are solved with \texttt{CVXOPT}, while the standard MOSAP formulations are solved with \texttt{IPOPT}.}}
	\label{tab:optimization_results}
\end{table}

\begin{remark}
	\label{rem:integer_proj}
	MLBLUE is more sensitive than MLMC and MFMC to integer projections. In fact, we note that taking the ceiling of the real-valued MOSAP solution $\bm{n}^*$ is generally a bad idea since $\bm{n}^*$ has often many entries that are not close to an integer. A better approach, which we employ in \texttt{BLUEST}, is to take the floor or ceiling of all such entries and try out all feasible combinations. This procedure can be accelerated and made reliable via heuristics that we do not describe here for the sake of brevity. We leave the investigation of a more sophisticated approach, using perhaps a branch and bound algorithm, to future research.
\end{remark}

\subsection{Coupling restrictions}
In this section we consider the third and last test problem. The objective here is to demonstrate that MLBLUE can automatically handle coupling restrictions. In particular, Problem 3 represents a common scenario in scientific computing in which the high-fidelity models are prohibitively expensive and cannot be simulated more than a handful of times. We will also investigate two different approaches for model covariance estimation in this context.

\subsubsection{Problem 3: linear random diffusion with restricted high-fidelity samples}
\label{subsec:problem1}

\paragraph{Problem description.} As our final test problem we choose a standard linear diffusion partial differential equation (PDE) with random field diffusion coefficient in 2D:
\begin{align}
\label{eq:problem1}
\begin{dcases}
\begin{array}{lll}
- \nabla\cdot(a(\bm{x},\omega)\nabla u(\bm{x},\omega)) = f(\bm{x}), & \bm{x}\in \mathcal{D}_1=(0,1)^2,\\
u(\bm{x},\omega)=g(\bm{x},\omega), & \bm{x}\in\partial \mathcal{D}_1.
\end{array}
\end{dcases}
\end{align}
Here, $f(\bm{x})=\exp(-\lVert\bm{x}-0.5\rVert_2^2)$ and $g(\bm{x},\omega)=\theta_1(\omega)x_1^2 + \theta_2(\omega)x_2^2 + 2\theta_3(\omega)x_1x_2$, where $\theta_1,\theta_2,\theta_3$ are independent zero-mean Gaussian random variable with variance $2^{-4}$. We set the diffusion coefficient to be $a=e^z$, where $z=z(\bm{x},\omega)$ is a Gaussian field with a Mat\'ern covariance matrix with smoothness parameter $\nu=1$ and correlation length $\lambda=0.2$. We choose the mean and standard deviation of $z$ so that $\E[a]=1.0$, and $\V[a]^{1/2}=0.3$ for all $\bm{x}\in\mathcal{D}_1$. For the definition and properties of Mat\'ern Gaussian fields, we refer the reader to \cite{PetterAbrahamsen1997}. For sampling the Mat\'ern field, we use the stochastic PDE approach by Lindgren et al.~\cite{Lindgren2011} combined with the linear cost-complexity sampling technique by Croci et al.~\cite{CrociWhiteNoise2018,croci2021multilevel}.

\paragraph{Quantity of interest.} We consider a single QoI given by $p=\int_{\mathcal{D}_1} u^2 \text{ d}x$.

\paragraph{Model set.} We define our model set by simple grid refinement using a hierarchy of $\ell=7$ uniform triangular grids so that model $i\in\{1,\dots,7\}$ corresponds to a grid with $2^{8-i}$ cells in each spatial direction.

\paragraph{Solver and model costs.} We solve \eqref{eq:problem1} with the finite element (FE) method using the open-source FE library \texttt{FEniCS} \cite{LoggEtAl2012} and piecewise-linear Lagrange elements using the conjugate gradient method preconditioned with algebraic multigrid. Cost estimation via CPU timings yields a roughly linear solver cost complexity and we thus define the model sampling costs to be the number of degrees of freedom in each FE model for simplicity.

\paragraph{Coupling restriction.}
We restrict the number of samples of the two highest-fidelity models to a fixed number $n_{\text{HF}}\in\{2,4,8,16\}$ and investigate how the results vary with $n_{\text{HF}}$.
For a study of how MLBLUE performs for a similar diffusion problem, but without any sampling restrictions, we refer the reader to the original papers by Schaden and Ullmann \cite{schaden2020multilevel,schaden2021asymptotic}. We remark that their work also includes efficiency comparisons between MLBLUE and other estimators, including MLMC, MFMC and ACV methods, which MLBLUE outperforms.

\paragraph{Covariance estimation.} We compare two approaches for estimating covariances.
\begin{itemize}
\item[a)] We take $n_{\text{HF}}$ pilot samples of all models and compute sample covariances with these samples.
  \item[b)] We take $16$ pilot samples of the low-fidelity models, compute their sample covariances, then use extrapolation to approximate the covariances of the high-fidelity models. Specifically:
\begin{enumerate}
	\item With the low-fidelity pilot samples, compute all low-fidelity model covariances, as well as the variance of the difference between model pairs, i.e., $\V[p_i-p_j]$ for all $i,j\in\{3,\dots,7\}$, $i\neq j$.
	\item Draw a single deterministic sample of all models by fixing $a=1$, $\theta_1=\theta_2=\theta_3=0$ (their average values), and estimate the convergence rate $r$ with respect to the model degrees of freedom.
	\item Use Richardson extrapolation with rate $2r$ to extrapolate the values of $\V[p_i]$ and $\V[p_i - p_{i+j}]$ for $i=1,2$ and $j=1,\dots,\bar{d}$ from their low-fidelity values. Here $\bar{d}$ is an integer parameter satisfying $1\leq \bar{d}\leq \ell-2$ that determines how many covariances are being extrapolated: we estimate the covariances between each high-fidelity model and the next $\bar{d}$ lower-fidelity models. We will be varying $\bar{d}$ in our experiments.
	\item Approximate the high-fidelity model covariances using the formula $C(p_i,p_{i+j}) = \frac{1}{2}(\V[p_i]+\V[p_{i+j}]-\V[p_i-p_{i+j}])$ for $i=1,2$ and $j=1,\dots,\bar{d}$.
\end{enumerate}
In step 2 we use the fact that for PDEs with random coefficients the variance convergence rate is typically twice the deterministic convergence rate, see e.g., \cite{Cliffe2011}. Note that in steps 3-4 we do not necessarily approximate the covariances between all models, hence some model groupings will not be allowed. Nevertheless, restricted model couplings are not problematic for MLBLUEs.
\end{itemize}

\paragraph{Estimator setup.} We proceed as follows. First, we fix a value of $n_{\text{HF}}\in\{2,4,8,16\}$, and either Approach a) or b). Then, we solve the budget-constrained MOSAP \eqref{eq:MLBLUE_MOSAP_SDP} with budget given by the cost of sampling all models $2^7=128$ times, with maximum allowed model group size $\kappa=4$ (cf.~Remark \ref{rem:max_group_size}), and with the additional restriction that the high-fidelity models cannot be sampled more than $n_{\text{HF}}$ times.

\paragraph{Results: estimator efficiency.} After denoting with $\theta$ the set of pilot samples taken, we compute the log of the resulting MLBLUE normalized estimator efficiency:
\begin{align}
\label{eq:normalized_efficiency}
\log_{10}\left(\frac{E(\theta)}{E(\theta_{\text{ex}})}\right)=\log_{10}\left(\frac{{\varepsilon}^2(\theta_{\text{ex}})}{\V[\hat{\mu}_1]}\right),
\end{align}
where $E(\theta)$ is the estimator efficiency defined as
\begin{align}
\label{eq:efficiency}
E(\theta) = \frac{1}{b\V[\hat{\mu}_1]},
\end{align}
and $\hat{\mu}_1=\hat{\mu}_1(\theta)$ is the MLBLUE constructed from a single realization $\theta$ of the pilot samples, with $\V[\hat{\mu}_1](\theta)$ being its variance.  Here $E(\theta_{\text{ex}})=(b\varepsilon^2(\theta_{\text{ex}}))^{-1}$ and ${\varepsilon}^2(\theta_{\text{ex}})$ is the variance of a MLBLUE estimator with $n_{\text{HF}}=\infty$ constructed from an exact model covariance estimation using $1000$ pilot samples. $E(\theta_{\text{ex}})$ essentially represents the efficiency of the best MLBLUE that can be constructed for this problem with the prescribed budget. We call the quantity in \eqref{eq:efficiency} ``estimator efficiency'' since the inverse product between a Monte Carlo estimator cost and its variance indicates whether an increased budget leads to a more accurate estimator. In fact, the accuracy of any Monte Carlo estimator for an unbiased QoI is proportional to the inverse square root of the cost, i.e., $\varepsilon = O(\text{cost}^{-1/2})$. Therefore, the efficiency $E = (\text{cost }{\varepsilon}^2)^{-1}$ is always an order-$1$ quantity in the estimator cost. Nevertheless, a more efficient method will lead to a higher accuracy, a smaller value of $\varepsilon$, and a higher value of $E$. Note that the closer \eqref{eq:normalized_efficiency} is to $0$, the closer the efficiency of the constructed MLBLUE will be to the best estimator efficiency.

\begin{figure}[h!]
	\centering
	\includegraphics[width=\textwidth]{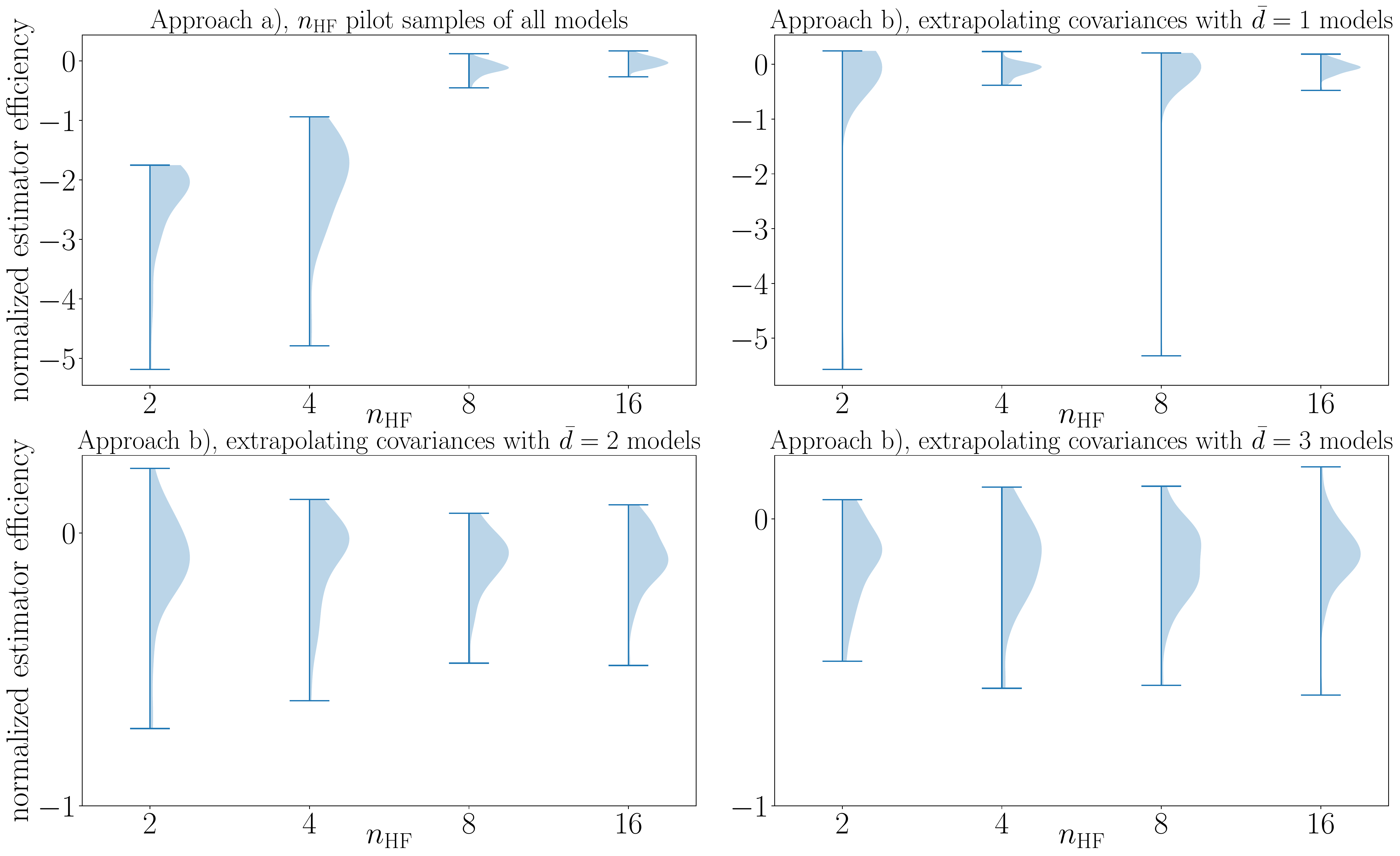}
	\caption{\textit{Half-violin plots of the normalized estimator efficiency \eqref{eq:normalized_efficiency} for the approaches for Problem~3 described in Section \ref{subsec:problem1} versus the number of restricted high-fidelity samples $n_{\text{HF}}$. The light-blue areas depict kernel-density estimates of the probability density function of \eqref{eq:normalized_efficiency} with respect to $\theta$. Values close to $0$ indicate near-best efficiency.}}
	\label{fig:restrictions_matern}
	\vspace{0pt}
\end{figure}

Since poor values of $\theta$ may lead to underestimating or overestimating the estimator variance, we draw $50$ independent realizations of the estimator to approximate $\V[\hat{\mu}_1](\theta)$. In Figure \ref{fig:restrictions_matern}, we study the effect of the high-fidelity sample restrictions and of the model covariance estimation approaches described in Section \ref{subsec:problem1} across $50$ independent realizations of the pilot samples $\theta$. We see how Approach a) leads to poor estimator performance for $n_{\text{HF}}\in\{2,4\}$, suggesting that this is not a good approach when the high-fidelity samples are severely restricted. Approach b) with $\bar{d}=1$ performs well in general, but it is not robust to an inaccurate pilot sample estimation. In fact, some realizations of $\theta$ lead to a poor estimator efficiency. On the other hand, Approach~b) for $\bar{d}>1$ yields good results for all values of $n_{\text{HF}}$ indicating that extrapolation of multiple model cross-covariances, when possible, should be preferred. We report that for both approaches the estimator efficiency does not improve further as $n_{\text{HF}}$ increases beyond $16$. This result indicates that MLBLUE can handle coupling restrictions with little effect on its efficiency.

\begin{remark}
	The values of \eqref{eq:normalized_efficiency} in Figure \ref{fig:restrictions_matern} that are larger than $0$ are likely due to noise in the estimation of $\V[\hat{\mu}_1]$. To produce this experiment we sampled $50^2=2500$ independent estimators. Making the estimation of the estimator variance more accurate, while prohibitively expensive, would hardly change the qualitative behavior of these results.
\end{remark}

\begin{remark}
	A consequence of not knowing all correlations in Approach b) with $\bar{d}=1$ is that the only allowed model groups that include the two highest-fidelity models $\mathcal{M}_1$ and $\mathcal{M}_2$ are
	\begin{align*}
	\{\mathcal{M}_1\},\ \{\mathcal{M}_2\},\ \{\mathcal{M}_1, \mathcal{M}_2\},\ \{\mathcal{M}_2, \mathcal{M}_3\},
	\end{align*}
	i.e., the two models can only be sampled on their own or coupled in pairs.
	On the other hand, all the groupings between the low-fidelity models $\mathcal{M}_3,\dots, \mathcal{M}_7$ are allowed. This coupling structure could be seen as a way of combining a MLMC estimator on the high-fidelity models (coupled in pairs) with a MFMC estimator on the low-fidelity models (all coupled together), cf.~Example \ref{ex:MLBLUE_flexible}. The MLBLUE algorithm can automatically find such an estimator with little input from the user.
\end{remark}

\section{Conclusions}
\label{sec:conclusions}

MLBLUEs are optimal across all multilevel and multifidelity linear unbiased estimators and they should thus be preferred in principle over any other such method. However, MLBLUEs (as well as all multilevel/multifidelity Monte Carlo methods) may lose efficiency if their MOSAP is not solved exactly or if the model covariance estimation is prohibitive.

In this paper we addressed both issues. We formulated the MLBLUE MOSAPs as semi-definite programs that can be solved reliably and efficiently and we showed that these estimators are robust with respect to limited and/or restricted pilot samples. Further, we have extended MLBLUE to the multi-output case by concurrently setting up multiple estimators for all QoIs, thus making MLBLUEs the only methods with an optimal strategy for handling multiple outputs. Consequently, this work enables the fast, reliable design and setup of MLBLUEs for forward UQ problems involving one or more QoIs. Numerical experimentation shows the resulting MLBLUEs to be more efficient than MLMC and MFMC. MLBLUEs can also deal efficiently with heterogeneous model sets.

Additional work is still needed to design a comprehensive MLBLUE algorithmic framework: Integer programming strategies for the MOSAP could be beneficial since the continuous relaxation of the MOSAP may be far from optimal for some problems (cf.~Remark \ref{rem:integer_proj}). Furthermore, specific strategies should be designed to decrease the cost of offline estimation and/or allow pilot sample reuse. For instance, QoIs with large kurtosis require a large number of pilot samples \cite{giles2015multilevel} which, if re-used within MLBLUE, would make the estimator biased \cite{schaden2020multilevel}, and hence not necessarily optimal. Finally, MLBLUE advanced estimation strategies for rare-event simulation, nested expectations, and non-smooth QoIs are yet to be investigated.

While the most efficient method for a given application is, in general, problem-dependent, single- and multi-output MLBLUEs are optimal whenever the above challenges are not relevant and should thus be the methods of choice. MLBLUE is currently the only method with an optimal multi-output strategy and that is capable of automatically addressing model selection, making it a powerful addition to the forward UQ toolbox.

\printbibliography

\end{document}